\documentclass[12pt,a4paper,reqno]{amsart}
\usepackage[usenames, dvipsnames]{color}
\usepackage{tikz}
\usepackage{amssymb}
\usepackage{amsthm,scalerel}
\usepackage{enumerate}
\usepackage[all]{xy}
\usepackage{fancybox}
\frenchspacing

\usepackage[colorlinks, linktocpage, citecolor = blue, linkcolor = blue, urlcolor = blue]{hyperref}

\usepackage{graphicx}
\usepackage[parfill]{parskip}
\addtolength{\textwidth}{3cm}
\addtolength{\hoffset}{-1.5cm}
\addtolength{\textheight}{3cm}
\addtolength{\voffset}{-1.5cm}

\newtheorem{theorem}{Theorem}
\newtheorem{proposition}{Proposition}[section]
\newtheorem{lemma}[proposition]{Lemma}

\theoremstyle{definition}
\newtheorem{definition}[proposition]{Definition}
\newtheorem{remark}[proposition]{Remark}

\newcommand{\PP}{\mathbb{P}}
\newcommand{\p}{\mathbb{P}}

\renewcommand{\k}{\mathrm{k}}
\newcommand{\Aut}{\mathrm{Aut}}

\newcommand{\PGL}{\mathrm{PGL}}

\newcommand{\C}{\mathbb{C}}

\newcommand{\G}{\mathbb{G}}

\newcommand{\id}{\mathrm{id}}

\renewcommand{\epsilon}{\varepsilon}
\renewcommand{\phi}{\varphi}

\DeclareMathOperator{\Bir}{Bir}

\newcommand{\Dec}{\mathrm{Dec}}
\newcommand{\Ine}{\mathrm{Ine}}

\title[The decomposition groups of conics and rational cubics]{The decomposition groups of plane\\ conics and plane rational cubics}

\author{Tom Ducat}
\address{Tom Ducat\\
Research Institute for Mathematical Sciences\\
Kyoto University\\
\mbox{Kyoto} 606-8502 Japan}
\email{taducat@kurims.kyoto-u.ac.jp}

\author{Isac Hed\'en}
\address{Isac Hed\'en\\
Research Institute for Mathematical Sciences\\
Kyoto University\\
\mbox{Kyoto} 606-8502 Japan}
\email{Isac.Heden@kurims.kyoto-u.ac.jp}

\author{Susanna Zimmermann}
\address{Susanna Zimmermann\\
Universit\'e Toulouse Paul Sabatier\\ 
Institut de Math\'ematiques\\
118 route de Narbonne\\
31062 Toulouse Cedex 9}
\email{susanna.zimmermann@math.univ-toulouse.fr}

\thanks{The first and second named authors are International Research Fellows of the Japanese Society for the Promotion of Sciences, and this work was supported by Grant-in-Aid for JSPS Fellows Number 15F15771 and 15F15751 respectively. The last named author gratefully acknowledges support by the Swiss National Science Foundation grant P2BSP2\_168743.}

\begin{document}

\begin{abstract}
The decomposition group of an irreducible plane curve $X\subset\p^2$ is the subgroup $\Dec(X)\subset\Bir(\p^2)$ of birational maps which restrict to a birational map of $X$. We show that $\Dec(X)$ is generated by its elements of degree $\leq2$ when $X$ is either a conic or rational cubic curve.
\end{abstract}

\subjclass[2010]{14E07}

\maketitle

\section{Introduction}
\subsection{Preliminaries.}

We work over an algebraically closed field $\k$ of any characteristic. By \emph{elementary quadratic transformation} we will mean a birational map $\phi\in\Bir(\p^2)$ of degree 2 with only proper base points. 

\begin{definition}
For an irreducible curve $X\subset \p^2$, the \emph{decomposition group $\Dec(X)$ of $X$} is the subgroup of $\Bir(\p^2)$ of all birational maps $\phi\in\Bir(\p^2)$ which restrict to a birational map $\phi\mid_{X}\colon X\dashrightarrow X$. 

Similarly, the \emph{inertia group $\Ine(X)$ of $X$} is the subgroup of $\Bir(\p^2)$ of all birational maps $\phi\in\Bir(\p^2)$ which restrict to the identity map $\phi\mid_{X} = \id_X$.
\end{definition}

Elements of $\Dec(X)$ are said to \emph{preserve} the curve $X$, whilst elements of $\Ine(X)$ are said to \emph{fix} $X$. We will write $\Aut(\p^2,X)=\Dec(X)\cap \PGL_3$ for the subgroup of linear maps $\Aut(\PP^2)=\PGL_3$ which preserve $X$.

The focus of this paper is on the group $\Dec(X)$ in the case that $X\subset\p^2$ is a plane rational curve of degree $\leq3$. In this case $X$ is either a line, a smooth conic, a nodal cubic or a cuspidal cubic.

\begin{remark}
A line $X\subset \p^2$ (resp.\ conic, nodal cubic, cuspidal cubic) is projectively equivalent to any other line $X'\subset \p^2$ (resp.\ conic, nodal cubic, cuspidal cubic), i.e.\ there is an automorphism $\lambda\in\PGL_3$ with $\lambda(X)=X'$. For rational curves of degree $\geq4$ this is no longer true in general.
\end{remark}

\subsection{Motivation.}
The decomposition and inertia groups of plane curves have appeared in a number of places.

\subsubsection{Decomposition and inertia groups of plane curves of genus $\geq1$}

The inertia groups of plane curves of geometric genus $\geq2$ were studied by Castelnuovo~\cite{Cas1892}, and his results were made more precise by Blanc--Pan--Vust~\cite{BPV2008}. 
In both articles adjoint linear systems are used to study properties of the group---a technique which does not work for curves of genus $\leq1$.
The inertia groups of smooth cubic curves have been studied by Blanc~\cite{B2008}.

Decomposition groups were introduced by Gizatullin~\cite{giz}, who used them as a tool to give sufficient conditions for $\Bir(\p^2)$ to be a simple group. This group is not simple, as shown later by Cantat--Lamy \cite{CL} for algebraically closed fields, and by Lonjou \cite{lonjou} for arbitrary fields. 
The decomposition groups of plane curve of genus $\geq2$ and some plane curves of genus $1$ (smooth cubic curves and Halphen curves) are described in \cite{BPV2009}, as well as the decomposition group of rational plane curves $X\subset\p^2$ of Kodaira dimension $\kappa(\p^2,X)=0$ or $1$. 

For curves $X\subset\p^2$ with $\kappa(\p^2,X)=-\infty$, the pair $(\p^2,X)$ is birationally equivalent to $(\p^2,L)$ where $L\subset\p^2$ is a line, and a description of $\Dec(L)$ is given by Theorem~\ref{lineTheorem} below. As $X\subset\p^2$ is the image of $L$ under a birational transformation $\phi$ of $\p^2$, we have an isomorphism $\Dec(X)\simeq\Dec(L)$, given by  $\psi\mapsto\phi^{-1} \psi \phi$. Although it is not degree-preserving, this isomorphism shows that $\Dec(X)$ is not finite.

\subsubsection{The decomposition group of a line} 

The classical Noether--Castelnuovo Theorem~\cite{Cas} states that the Cremona group $\Bir(\p^2)$ has a presentation given by:
\[ \Bir(\p^2) = \big\langle \, \PGL_3, \, \sigma \, \big\rangle \]
where $\sigma$ is any choice of elementary quadratic transformation. The second two authors~\cite{hedzim} have shown that an analogous statement holds for the decomposition group of a line:
\begin{theorem}[\cite{hedzim}]\label{lineTheorem}
Let $L\subset \p^2$ be a line. Then
\[ \Dec(L) = \big\langle \, \Aut(\p^2,L), \, \sigma \, \big\rangle \]
for any choice of elementary quadratic transformation $\sigma\in\Dec(L)$. In particular any map $\tau\in\Dec(L)$ can be factored into elementary quadratic transformations inside $\Dec(L)$.
\end{theorem}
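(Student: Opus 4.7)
The plan is to take the classical Noether--Castelnuovo theorem as a starting point and refine an arbitrary factorization of $\tau\in\Dec(L)$ so that every factor itself preserves $L$. I would proceed by induction on $d=\deg(\tau)$: the base case $d=1$ is immediate since then $\tau\in\Aut(\PP^2,L)$, and for the inductive step the goal is to find an elementary quadratic transformation $\sigma'\in\Dec(L)$ such that $\tau\circ\sigma'$ has degree strictly less than $d$. A second, independent step shows that any two elementary quadratic transformations in $\Dec(L)$ are conjugate by an element of $\Aut(\PP^2,L)$, so that one fixed choice $\sigma$ suffices.

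As preparation I would first describe which elementary quadratic transformations preserve $L$. A direct computation with three non-collinear proper base points $p_1,p_2,p_3$ shows that such a map lies in $\Dec(L)$ precisely when $L$ passes through exactly one $p_i$ and the other two are arranged so that the induced map on $L$ is non-constant. An explicit example in coordinates is $[x:y:z]\mapsto[y(y-z):xy:xz]$, with base points $[1:0:0],[0:0:1],[0:1:1]$, which restricts to the involution $[s:t:0]\leftrightarrow[t:s:0]$ on $L=\{z=0\}$. Since $\Aut(\PP^2,L)$ acts transitively on such adapted base point configurations, any two elementary quadratics in $\Dec(L)$ are $\Aut(\PP^2,L)$-conjugate, which delivers the conjugacy statement needed above.

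For the degree-reduction step I would appeal to Noether's inequality: any plane Cremona map of degree $d\geq 2$ has three base points (proper or infinitely near) of multiplicities $m_1\geq m_2\geq m_3$ with $m_1+m_2+m_3\geq d+1$, and composing with the quadratic transformation based at these three points strictly reduces the degree. The hypothesis $\tau\in\Dec(L)$ enters through the relation $\tau^*L=L+E$ for some effective curve $E$: resolving $\tau$ and tracking multiplicities of $L$ and $E$ along each exceptional divisor provides numerical constraints which should force the existence of a Noether triple adapted to $L$, in the sense described above, so that the quadratic based at that triple lies in $\Dec(L)$ and allows the induction to proceed.

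The principal obstacle is precisely this adaptation. The Noether triple furnished by the inequality is not a priori compatible with $L$: it could, for example, consist of two proper base points on $L$ and one off, in which case the associated quadratic would contract $L$ rather than preserve it and would not lie in $\Dec(L)$. Overcoming this requires a careful case analysis of how the top-multiplicity base points of $\tau^{-1}$ sit with respect to $L$, and in some cases replacing the maximal triple by a sub-maximal one that is properly adapted to $L$ while still dropping the degree. This bookkeeping, rather than any conceptual difficulty, is what makes the result require a separate argument beyond the classical Noether--Castelnuovo theorem.
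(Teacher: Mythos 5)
First, note that the paper does not actually prove Theorem~\ref{lineTheorem}: it is imported from the reference \cite{hedzim} and used as input (statement (A)) for the conic case, so there is no in-paper proof to compare against. Judged on its own terms, your proposal is a strategy outline rather than a proof, and the gap sits exactly where you yourself locate it. The inductive step requires, for each $\tau\in\Dec(L)$ of degree $d\geq 2$, an elementary quadratic transformation $\sigma'\in\Dec(L)$ --- hence with exactly one proper base point on $L$, two off $L$, and image line equal to $L$ --- whose base points are base points of $\tau$ (or $\tau^{-1}$) of total multiplicity $\geq d+1$. You observe that the Noether triple need not be adapted to $L$ and assert that numerical constraints ``should force'' the existence of an adapted (possibly sub-maximal) triple, deferring the case analysis. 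But that case analysis \emph{is} the theorem; without it nothing has been established. Two concrete failure modes must be ruled out: (i) the points of maximal multiplicity furnished by Noether's inequality may be infinitely near rather than proper, in which case no elementary quadratic transformation is based at them at all --- this is the classical gap in Noether's original argument and it does not disappear in the relative setting; (ii) even when enough proper base points exist, the high-multiplicity ones may all lie on $L$, or be collinear, so that every candidate quadratic either contracts $L$ or fails to be elementary, forcing a multi-step reduction using points of smaller multiplicity. Carrying this out is the substance of \cite{hedzim}, and the proposal does not do it.

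Two smaller imprecisions. An elementary quadratic map with exactly one base point on $L$ automatically restricts to an isomorphism of $L$ onto its image line, so the condition for membership in $\Dec(L)$ is not that the induced map be non-constant but that the image line be $L$ itself. And two elementary quadratic maps in $\Dec(L)$ with the same base points need not be conjugate in $\Aut(\p^2,L)$; they differ by \emph{left} composition with an element of $\Aut(\p^2,L)$. What you actually need --- and what does follow from the transitivity of $\Aut(\p^2,L)$ on the configuration space $B$ recalled in \S~\ref{generatorsForDecC} --- is the weaker statement that $\sigma_2=\alpha\,\sigma_1\,\beta$ for some $\alpha,\beta\in\Aut(\p^2,L)$, which suffices to show that $\langle\,\Aut(\p^2,L),\sigma\,\rangle$ is independent of the choice of $\sigma$. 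That part of your argument is fine once rephrased.
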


In this article, we present a similar theorem for conic and rational cubic curves. Uehara~\cite[Proposition 2.11]{uehara} proves that for the cuspidal cubic $X\subset\p^2$, the elements of the subset
\[\{f\in\Dec(X)\mid \ \text{$f$ is an automorphism near the cusp}\}\subsetneq\Dec(X)\]
 can be decomposed into quadratic transformations preserving $X$. 
Theorem~\ref{cubicTheorem} generalises his result to all of $\Dec(X)$.

\subsubsection{Relationship to dynamics of birational maps}

Birational maps of $\p^2$ preserving a curve of degree $\leq3$ show up naturally when studying the dynamical behaviour of birational maps of surfaces. For instance, Diller--Jackson--Sommese~\cite[Theorem 1.1]{DJS} show that a connected curve which is preserved by an algebraically stable element of $\Bir(\p^2)$ with positive first dynamical degree necessarily has degree $\leq3$.

In their studies of automorphisms of rational surfaces, Bedford--Kim~\cite[\S~1]{BK} explore the dynamical behaviour of the family of birational transformations $f_{a,b}\colon(x,y)\mapsto\left(y,\frac{y+a}{x+b}\right)$, for $a,b\in\C$. In particular, they focus on maps of this kind preserving a curve, and show that this curve is necessarily cubic.

\subsection{Main results}

We will use Theorem~\ref{lineTheorem} to deduce:

\begin{theorem}\label{conicTheorem}
Let $C\subset \p^2$ be a conic. Then any map $\tau\in\Dec(C)$ can be factored into elementary quadratic transformations inside $\Dec(C)$.
\end{theorem}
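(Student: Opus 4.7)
The plan is to reduce to Theorem~\ref{lineTheorem} by conjugating with a quadratic transformation that sends $C$ to a line. First I fix three distinct points $p_1,p_2,p_3\in C$ and let $\sigma_0\in\Bir(\p^2)$ be the elementary quadratic transformation based at these points. Since the smooth conic passes through all three base points with multiplicity one, the degree formula $2\cdot 2-(1+1+1)=1$ shows that $\sigma_0(C)=L$ is a line. Conjugation by $\sigma_0$ therefore gives a (non degree-preserving) isomorphism $\Dec(C)\to\Dec(L)$, $\tau\mapsto\sigma_0\tau\sigma_0^{-1}$, although $\sigma_0$ itself does not lie in $\Dec(C)$.

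Given $\tau\in\Dec(C)$, apply Theorem~\ref{lineTheorem} to $\sigma_0\tau\sigma_0^{-1}\in\Dec(L)$ to write $\sigma_0\tau\sigma_0^{-1}=\rho_1\cdots\rho_n$ with each $\rho_i$ an elementary quadratic in $\Dec(L)$. Inserting the identity $\sigma_0^{-1}\sigma_0$ between consecutive factors gives
\[
\tau=(\sigma_0^{-1}\rho_1\sigma_0)(\sigma_0^{-1}\rho_2\sigma_0)\cdots(\sigma_0^{-1}\rho_n\sigma_0),
\]
a product of elements of $\Dec(C)$. Each factor $\sigma_0^{-1}\rho_i\sigma_0$ has degree potentially as large as $8$, so this is not yet a factorization into elementary quadratics, and the real content of the theorem reduces to the following statement: \emph{every conjugate $\sigma_0^{-1}\rho\,\sigma_0$, with $\rho$ an elementary quadratic in $\Dec(L)$, decomposes as a product of elementary quadratics in $\Dec(C)$.}

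To establish this, I would track the base points: $\rho$ has two base points $q_1,q_2\in L$ and one $q_3\notin L$; pulling back through $\sigma_0^{-1}$, which restricts to a birational map $L\dashrightarrow C$, the first two correspond to points of $C$ and $q_3$ to a point off $C$, possibly infinitely near in case any $q_j$ coincides with one of the three base points of $\sigma_0^{-1}$. The plan is a case-by-case analysis according to this incidence data, rewriting $\sigma_0^{-1}\rho\,\sigma_0$ as a short chain of intermediate quadratic Cremonas, each based at two points of $C$ and one off $C$ (equivalently, each in $\Dec(C)$). I expect this base-point bookkeeping --- and in particular handling the degenerate configurations where some base points become infinitely near --- to be the main obstacle; an alternative route, which I would fall back on if the direct factorization becomes unwieldy, is a Noether--Castelnuovo-style induction on $\deg(\sigma_0^{-1}\rho\,\sigma_0)$ performed entirely inside $\Dec(C)$, seeking for each non-linear $\phi\in\Dec(C)$ an elementary quadratic $\sigma\in\Dec(C)$ with $\deg(\phi\sigma)<\deg(\phi)$.

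Finally, linear maps cause no trouble: if $\alpha\in\Aut(\p^2,C)$ and $\sigma\in\Dec(C)$ is any elementary quadratic, then $\alpha\sigma$ is again an elementary quadratic in $\Dec(C)$ (same three proper base points), so $\alpha=(\alpha\sigma)\sigma^{-1}$ is a product of two such, justifying the reduction to elementary quadratics only.
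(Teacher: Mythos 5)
Your first move---conjugating $\Dec(C)$ into $\Dec(L)$ by a quadratic map sending $C$ to $L$ and invoking Theorem~\ref{lineTheorem}---is exactly the right reduction, and it is the one the paper uses. But the proposal stops precisely where the work begins: you correctly isolate the key claim that each conjugate $\sigma_0^{-1}\rho_i\sigma_0$ (a map of degree up to $8$ in $\Dec(C)$) must be factored into elementary quadratics inside $\Dec(C)$, and then you only sketch a ``case-by-case analysis of incidence data'' or a fallback Noether--Castelnuovo-style induction, neither of which is carried out. That claim is the entire content of the theorem, and the fallback induction is essentially as hard as the problem you started with (it is the difficult part of the line case in \cite{hedzim}). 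There is also a concrete error in the bookkeeping you do begin: an elementary quadratic $\rho\in\Dec(L)$ has exactly \emph{one} base point on $L$ and two off $L$ (two base points on $L$ would force $L$ to be contracted), not two on and one off as you state; this matters because the whole case analysis hinges on these incidence patterns.

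The paper's way around the degree-$8$ obstacle is to \emph{not} fix the conjugator. Writing $\Phi_{L,C}$ for the set of elementary quadratics mapping $L$ onto $C$, it chooses for each elementary quadratic $\tau_i\in\Dec(L)$, with base points $P_i,Q_i\notin L$ and $R_i\in L$, a map $\psi_i\in\Phi_{L,C}$ sharing the two base points $P_i,Q_i$ with $\tau_i$; then $\phi_i:=\psi_i\tau_i^{-1}$ is again an elementary quadratic in $\Phi_{L,C}$ and $\phi_i\tau_i\psi_i^{-1}=\id_{\p^2}$, so the conjugate is trivially decomposed (Lemma~\ref{CforConics}). The price is that consecutive conjugators no longer match, and one must factor the transition maps $\psi_{i+1}\phi_i^{-1}\in\Dec(C)$; Lemma~\ref{BforConics} does this by interpolating between the two base-point triples one point at a time, so that each successive pair of maps in $\Phi_{L,C}$ shares two base points and their quotient is a single elementary quadratic in $\Dec(C)$. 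Proposition~\ref{mainProp} then assembles these pieces. If you want to salvage your fixed-$\sigma_0$ approach, you would in effect have to reprove both of these lemmas inside your case analysis; adopting the varying-conjugator bookkeeping is what makes the argument short.
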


Moreover, from Theorem~\ref{conicTheorem} we will deduce:

\begin{theorem}\label{cubicTheorem}
Let $X\subset \p^2$ be a rational cubic and suppose that the characteristic of $\k$ is not 2. Then any map $\tau\in\Dec(X)$ can be factored into elementary quadratic transformations inside $\Dec(X)$.
\end{theorem}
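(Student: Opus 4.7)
The plan is to deduce the cubic case from the conic case (Theorem~\ref{conicTheorem}), used as a black box, by conjugating $\tau$ along a birational map that sends $X$ to a smooth conic and then pulling back the resulting factorization.

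To set up the reduction, let $p\in X$ be the unique singular point of $X$ (a node or a cusp) and choose two smooth points $r_1, r_2 \in X$ in sufficiently general position. Let $\phi\in \Bir(\p^2)$ be the elementary quadratic transformation with proper base points $\{p,r_1,r_2\}$. Since $X$ has multiplicity $2$ at $p$ and multiplicity $1$ at each $r_i$, the image $C := \phi(X)$ has degree $2\cdot 3-2-1-1 = 2$; for sufficiently generic $r_1, r_2$ the conic $C$ is smooth. Conjugation by $\phi$ then provides an isomorphism $\Dec(X)\xrightarrow{\sim}\Dec(C)$. Given $\tau\in\Dec(X)$, Theorem~\ref{conicTheorem} applied to $\phi\tau\phi^{-1}\in\Dec(C)$ produces a factorization $\phi\tau\phi^{-1} = \rho_1\cdots\rho_n$ with each $\rho_i$ an elementary quadratic transformation in $\Dec(C)$. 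Conjugating back gives
\[
\tau = (\phi^{-1}\rho_1\phi)\cdots(\phi^{-1}\rho_n\phi),
\]
where each factor lies in $\Dec(X)$ but can \emph{a priori} have degree as large as $8$.

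The main obstacle, and the technical heart of the argument, is therefore to establish a conjugation lemma: for every elementary quadratic $\rho$ in $\Dec(C)$, the map $\phi^{-1}\rho\phi$ factors as a product of elementary quadratic transformations lying in $\Dec(X)$. I would prove this by case analysis on the incidence of the three base points of $\rho$ with the three base points $\{p', r_1', r_2'\}$ of $\phi^{-1}$ (of which only $p'$ lies on $C$, with multiplicity $1$). In each case one resolves the composition $\phi^{-1}\rho\phi$ on a suitable blow-up, reads off its base-point configuration on the source $\p^2$ (necessarily compatible with $X$), and exhibits an explicit short chain of elementary quadratic transformations in $\Dec(X)$ whose product equals the conjugate---in essence, a Noether--Castelnuovo-type degree reduction performed internally to $\Dec(X)$.

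The hypothesis $\car(\k)\ne 2$ enters in the cuspidal case: it rules out the pathological cuspidal tangent behaviour of characteristic~$2$ and guarantees that generic choices of $r_1,r_2$ make $C$ smooth and that the base-point configurations arising in the conjugation lemma are non-degenerate. The nodal case, by contrast, requires no such restriction.
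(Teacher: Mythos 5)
Your global strategy (conjugate $\tau$ to the decomposition group of a conic via a quadratic map based at the singular point of $X$ and two smooth points, then invoke Theorem~\ref{conicTheorem}) is the same reduction the paper carries out, but your proof has a genuine gap exactly where the real work lies. You conjugate every factor $\rho_i$ by the \emph{same} map $\phi$, producing elements $\phi^{-1}\rho_i\phi\in\Dec(X)$ of degree up to $8$, and then assert a ``conjugation lemma'' that each such element factors into elementary quadratic transformations inside $\Dec(X)$. That lemma is not a routine case analysis: decomposing a degree-$8$ map into quadratics \emph{while staying inside} $\Dec(X)$ is essentially the content of the theorem itself, and the paper's Proposition~\ref{Prop:Xd} shows that for the curves $X_d$ with $d\geq 4$ such an ``internal Noether--Castelnuovo reduction'' is outright false. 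So the step you describe as ``exhibit an explicit short chain \ldots in essence a Noether--Castelnuovo-type degree reduction performed internally to $\Dec(X)$'' cannot be waved at; as written your argument is circular at its technical heart.

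The paper avoids this entirely by \emph{varying} the conjugating maps: writing $\psi^{-1}\tau\phi=\tau_n\cdots\tau_1$ in $\Dec(C)$, it chooses for each $\tau_i$ a pair $\phi_i,\psi_i\in\Phi_{C,X}$ sharing two base points with $\tau_i$, so that $\phi_i\tau_i\psi_i^{-1}=\id_{\p^2}$ (Lemma~\ref{CforCubics}, after first using Proposition~\ref{countablyGeneratedProp} to reduce to $\tau_i$ not contracting a tangent line to $C$); the transition maps $\psi_{i+1}\phi_i^{-1}$ are then factored by interpolating a short chain of maps in $\Phi_{C,X}$ whose consecutive members share two base points (Lemma~\ref{BforCubics}, with an extra auxiliary point $S$ in the cuspidal case). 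If you want to salvage your approach you should replace the fixed $\phi$ by such adapted intertwiners. Two smaller points: the characteristic-$2$ hypothesis is not about the cusp of $X$ but about tangent lines to the intermediate conic $C$ (in characteristic $2$ all tangents to a smooth conic pass through its strange point, so one cannot choose base points avoiding the tangent lines through a given point), and this obstruction affects the nodal case as well, contrary to your last claim.
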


The basic strategy used to prove both Theorems~\ref{conicTheorem} \& \ref{cubicTheorem} is the same in each case and is explained in \S~\ref{mainPropSection}. Given a curve $Z\subset \p^2$, the idea is to conjugate $\tau\in\Dec(Z)$ to $\tau'\in\Dec(Y)$, for a curve $Y\subset\p^2$ of lower degree, and then use the result for $Y$.

\begin{remark} \label{notChar2}
The proof of each theorem is elementary and only requires choosing quadratic transformations with base points that lie outside of a collection of finitely many points and lines. In the cubic case we need to choose base points which avoid all of the tangent lines to a conic which pass through a given point. We must restrict to a field $\k$ of characteristic $\neq2$ in this case, since over fields of characteristic 2 every line through a given point may be tangent to a conic (see \cite[Appendix to \S~2]{conicsInChar2}).
\end{remark}

\begin{remark} 
As shown in Proposition~\ref{countablyGeneratedProp}, for a conic $C$ it is still possible to write $\Dec(C) = \big\langle \Aut(\p^2,C), \sigma \big\rangle$ using just one suitably general elementary quadratic transformation $\sigma$ (where `suitably general' means that $\sigma$ does not contract a tangent line to $C$). However, if the base field $\k$ is uncountable then we need an uncountable number of elementary quadratic transformations to generate both $\Ine(C)$ (see Remark~\ref{uncountableIneC}) and $\Dec(X)$ for $X$ a nodal cubic (see \S~\ref{generatingCubic}).
\end{remark}
\subsection{Acknowledgements} We would like to thank Eric Bedford and Jeffrey Diller for helpful comments.

\section{The main Proposition} \label{mainPropSection}

Let $Y,Z\subset \p^2$ be two arbitrary irreducible plane curves. 

\begin{definition}
Let $\Phi_{Y,Z} \subset \Bir(\p^2)$ be the \emph{set of all elementary quadratic transformations $\phi$ which map $Y$ birationally onto $Z$}. 
\end{definition}
Note that $\Phi_{Y,Z}$ is a (possibly empty) \emph{subset} of  $\Bir(\p^2)$ and not a subgroup. For any $\phi,\psi\in\Phi_{Y,Z}$ we clearly have $\phi\psi^{-1}\in\Dec(Z)$. More generally for any $\tau\in \Dec(Y)$ we have $\phi\tau\psi^{-1}\in\Dec(Z)$.
 
\begin{proposition} \label{mainProp}
Suppose that $\Phi_{Y,Z}\neq\emptyset$ and the following three statements hold:
\begin{enumerate}
\item[(A)] Any $\tau\in \Dec(Y)$ can be factored into elementary quadratic transformations inside $\Dec(Y)$.
\item[(B)] For any $\phi,\psi \in\Phi_{Y,Z}$ the composition $\phi\psi^{-1}\in \Dec(Z)$ can be factored into elementary quadratic transformations inside $\Dec(Z)$.
\item[(C)] For any elementary quadratic transformation $\tau\in\Dec(Y)$ there exist $\phi,\psi \in\Phi_{Y,Z}$ such that $\phi \tau \psi^{-1}\in \Dec(Z)$ can be factored into elementary quadratic transformations inside $\Dec(Z)$.
\end{enumerate}
Then any $\tau\in \Dec(Z)$ can be factored into elementary quadratic transformations inside $\Dec(Z)$. 
\end{proposition}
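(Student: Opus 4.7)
The plan is to use the nonempty set $\Phi_{Y,Z}$ as a ``birational bridge'' that converts the problem for $\Dec(Z)$ into the already-solved problem for $\Dec(Y)$, and then to patch up the conjugated factors one by one using hypotheses (B) and (C). The three hypotheses are evidently tailored so that this telescoping argument goes through, and indeed the proof should be a short formal manipulation.

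Concretely, I would fix once and for all an element $\alpha \in \Phi_{Y,Z}$, which exists by assumption. Given any $\tau \in \Dec(Z)$, the conjugate $\gamma := \alpha^{-1} \tau \alpha$ restricts, via $Y \dashrightarrow Z \dashrightarrow Z \dashrightarrow Y$, to a birational self-map of $Y$, so $\gamma \in \Dec(Y)$. Applying hypothesis (A), write
\[ \gamma = \tau_1 \tau_2 \cdots \tau_n \]
with each $\tau_i$ an elementary quadratic transformation in $\Dec(Y)$. Telescoping, we get
\[ \tau \;=\; \alpha \gamma \alpha^{-1} \;=\; (\alpha \tau_1 \alpha^{-1}) (\alpha \tau_2 \alpha^{-1}) \cdots (\alpha \tau_n \alpha^{-1}), \]
which reduces the problem to showing that each single conjugate $\alpha \tau_i \alpha^{-1}$ admits a factorisation into elementary quadratic transformations inside $\Dec(Z)$.

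To handle a given $\alpha \tau_i \alpha^{-1}$, apply hypothesis (C) to the elementary quadratic $\tau_i \in \Dec(Y)$ to obtain $\phi_i, \psi_i \in \Phi_{Y,Z}$ such that $\phi_i \tau_i \psi_i^{-1}$ factors into elementary quadratic transformations inside $\Dec(Z)$. Then insert the identity twice to write
\[ \alpha \tau_i \alpha^{-1} \;=\; (\alpha \phi_i^{-1}) \cdot (\phi_i \tau_i \psi_i^{-1}) \cdot (\psi_i \alpha^{-1}). \]
Both outer factors are of the form $\phi \psi^{-1}$ with $\phi, \psi \in \Phi_{Y,Z}$, so hypothesis (B) factors them into elementary quadratic transformations inside $\Dec(Z)$; the middle factor factors by choice of $\phi_i, \psi_i$. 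Concatenating these factorisations over $i = 1, \dots, n$ yields the desired factorisation of $\tau$.

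I do not expect any serious obstacle in this argument: the algebraic manipulation is essentially forced by the statements of (A), (B), (C), and in particular (B) and (C) are tailored exactly to patch up the conjugation step. The real difficulty in the paper will lie elsewhere, namely in verifying (A), (B), and (C) in the concrete settings of Theorems \ref{conicTheorem} and \ref{cubicTheorem}, where one must choose the auxiliary elementary quadratic transformations so that their base points avoid various bad loci (as foreshadowed in Remark \ref{notChar2}).
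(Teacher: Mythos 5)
Your proof is correct and follows essentially the same telescoping argument as the paper: conjugate $\tau$ into $\Dec(Y)$ via an element of $\Phi_{Y,Z}$, factor there using (A), and patch each conjugated factor with (B) and (C). The only cosmetic difference is that you use a single bridge $\alpha$ on both sides and leave adjacent bridging factors $(\psi_i\alpha^{-1})(\alpha\phi_{i+1}^{-1})$ uncollapsed, whereas the paper merges them into a single term $\psi_{i+1}\phi_i^{-1}$ handled by (B); both are the same factorisation.
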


\begin{proof}
Suppose that $\tau\in\Dec(Z)$ and choose any two maps $\phi,\psi\in\Phi_{Y,Z}\neq\emptyset$. Then by (A) we can factor $\tau' := \psi^{-1}\tau\phi\in\Dec(Y)$ into elementary quadratic transformations $\tau' = \tau_n\tau_{n-1}\cdots\tau_2\tau_1$ with $\tau_i\in\Dec(Y)$ for all $i=1,\ldots,n$. 

By (C) we can find $\phi_i,\psi_i\in\Phi_{Y,Z}$ such that $f_i := \phi_i\tau_i\psi_i^{-1}\in \Dec(Z)$ can be factored into elementary quadratic transformations inside $\Dec(Z)$ for all $i=1,\ldots,n$. 

Now let $\phi_0 := \phi$ and $\psi_{n+1} := \psi$. Then by (B) we can factor $g_i := \psi_{i+1}\phi_{i}^{-1}\in \Dec(Z)$ into elementary quadratic transformations inside $\Dec(Z)$ for all $i=0,\ldots,n$.  

We can write $\tau = g_n f_n g_{n-1} \cdots g_1  f_1  g_0$, according to the diagram:
\begin{center}
\begin{tikzpicture}[scale=1]
	\node at (0,0) {$Z$}; 	\node at (2,0) {$Z$}; 	\node at (4,0) {$Z$};
	\node at (6,0) {$Z$}; 	\node at (8,0) {$Z$}; 	\node at (10,0) {$Z$};
	\node at (12,0) {$Z$}; \node at (14,0) {$Z$};
	\node at (1,1) {$Y$}; 	\node at (5,1) {$Y$};
	\node at (9,1) {$Y$}; 	\node at (13,1) {$Y$};
	
	\draw[->,dashed] (0.7,0.7) -- node[above left, pos=0.7]
	{$\scriptstyle \phi_{0}$} (0.3,0.3);
	\draw[->,dashed] (4.7,0.7) -- node[above left, pos=0.7]
	{$\scriptstyle \phi_{1}$} (4.3,0.3);
	\draw[->,dashed] (8.7,0.7) -- node[above left, pos=0.7]
	{$\scriptstyle \phi_{n-1}$} (8.3,0.3);
	\draw[->,dashed] (12.7,0.7) -- node[above left, pos=0.7]
	{$\scriptstyle \phi_{n}$} (12.3,0.3);
	
	\draw[->,dashed] (1.3,0.7) -- node[above right, pos=0.7]
	{$\scriptstyle \psi_{1}$} (1.7,0.3);
	\draw[->,dashed] (5.3,0.7) -- node[above right, pos=0.7]
	{$\scriptstyle \psi_{2}$} (5.7,0.3);
	\draw[->,dashed] (9.3,0.7) -- node[above right, pos=0.7] {$\scriptstyle \psi_{n}$} (9.7,0.3);
	\draw[->,dashed] (13.3,0.7) -- node[above right, pos=0.7] {$\scriptstyle \psi_{n+1}$} (13.7,0.3);
	
	\draw[->,dashed] (2,1) -- node[above] {$\scriptstyle \tau_{1}$} (4,1);
	\draw[->,dashed] (5.5,1) -- node[above] {$\scriptstyle \tau_{2}$} (6.5,1);
	\draw[->,dashed] (7.5,1) -- node[above] {$\scriptstyle \tau_{n-1}$} (8.5,1);
	\draw[->,dashed] (10,1) -- node[above] {$\scriptstyle \tau_{n}$} (12,1);
	
	\draw[->,dashed] (0.5,0) -- node[below] {$\scriptstyle g_{0}$} (1.5,0);
	\draw[->,dashed] (2.5,0) -- node[below] {$\scriptstyle f_{1}$} (3.5,0);
	\draw[->,dashed] (4.5,0) -- node[below] {$\scriptstyle g_{1}$} (5.5,0);
	
	\draw[->,dashed] (8.5,0) -- node[below] {$\scriptstyle g_{n-1}$} (9.5,0);
	\draw[->,dashed] (10.5,0) -- node[below] {$\scriptstyle f_{n}$} (11.5,0);
	\draw[->,dashed] (12.5,0) -- node[below] {$\scriptstyle g_{n}$} (13.5,0);
	
	\node at (7,1) {$\cdots$};
	\node at (7,0) {$\cdots$};
\end{tikzpicture}
\end{center}
and hence we can factor $\tau$ into elementary quadratic transformations inside $\Dec(Z)$.
\end{proof}

Theorem~\ref{conicTheorem} and Theorem~\ref{cubicTheorem} follow from Proposition~\ref{mainProp}, where the three statements (A), (B), (C) appearing in the proposition are proved in each case according to:
\begin{table}[h]
\begin{center}
\renewcommand*{\arraystretch}{1.5}
\begin{tabular}{c|ccc}
 & (A) & (B) & (C) \\ \hline
Theorem~\ref{conicTheorem} & Theorem~\ref{lineTheorem} & Lemma~\ref{BforConics} & Lemma~\ref{CforConics} \\
Theorem~\ref{cubicTheorem} & Theorem~\ref{conicTheorem} & Lemma~\ref{BforCubics} & Lemma~\ref{CforCubics} 
\end{tabular}
\end{center}
\end{table}

\section{The decomposition group of a conic} \label{conicSection}

Throughout this section we let $L\subset \p^2$ denote a fixed line and $C\subset \p^2$ a conic. 

\begin{remark}
If $\phi\in\Bir(\p^2)$ is an elementary quadratic transformation belonging to $\Phi_{L,C}$ then all three base points of $\phi$ must lie outside of $L$. Conversely, given any three non-collinear points in $\p^2\setminus L$ we can always find an elementary quadratic transformation $\phi\in\Phi_{L,C}$ with these as base points.
\end{remark}

\subsection{Proof of Theorem~\ref{conicTheorem}}
We prove statements (B) \& (C) in Proposition~\ref{mainProp} in the special case that $Y=L$ a line and $Z=C$ a conic.
\subsubsection{Proof of statement (B) for conics.}
\begin{lemma} \label{BforConics}
Suppose that $\phi_1,\phi_2\in \Phi_{L,C}$. Then the composition $\phi_2\phi_1^{-1}\in\Dec(C)$ can be factored into elementary quadratic transformations inside $\Dec(C)$. 
\end{lemma}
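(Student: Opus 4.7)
My plan is to factor $\phi_2\phi_1^{-1}$ as a product of three elementary quadratic transformations in $\Dec(C)$, by interpolating between $\phi_1$ and $\phi_2$ via two auxiliary maps $\phi_3,\phi_4\in\Phi_{L,C}$ chosen so that consecutive maps in the chain $\phi_1,\phi_3,\phi_4,\phi_2$ share two of their three domain base points. One then telescopes
\[ \phi_2\phi_1^{-1}=(\phi_2\phi_4^{-1})(\phi_4\phi_3^{-1})(\phi_3\phi_1^{-1}) \]
and shows that each of the three bracketed factors is an elementary quadratic transformation belonging to $\Dec(C)$.

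\textbf{Key local claim.} Suppose $\phi,\phi'\in\Phi_{L,C}$ have base points $\{p_1,p_2,p_3\}$ and $\{p_1,p_2,p_3'\}$ respectively, sharing exactly two. The linear system of $\phi'$ is the net of conics through $p_1,p_2,p_3'$. Pushing a generic such conic $Q$ forward by $\phi$, the multiplicities $m_1=m_2=1$, $m_3=0$ of $Q$ at the base points of $\phi$ give $\deg(\phi_{\ast}Q)=2\cdot 2 -1-1-0=2$, so $\phi'\phi^{-1}$ has degree $2$. A short intersection-number computation on the standard resolution identifies this linear system as the net of conics through $q_1,q_2,\phi(p_3')$, where $q_1,q_2\in C$ are the images under $\phi$ of the contracted lines $\overline{p_2p_3}$ and $\overline{p_1p_3}$. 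Since $\phi^{-1}(C)=L\cup \overline{p_1p_2}\cup \overline{p_1p_3}\cup \overline{p_2p_3}$, whenever $p_3'$ lies outside this union the point $\phi(p_3')$ is off $C$ and off the line $\overline{q_1q_2}$, so $\phi'\phi^{-1}$ is an elementary quadratic transformation with exactly two base points on $C$, and hence belongs to $\Dec(C)$.

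\textbf{Assembly and main obstacle.} Writing $\phi_1$ with domain base points $\{p_1,p_2,p_3\}$ and $\phi_2$ with $\{p_1',p_2',p_3'\}$, I choose $\phi_3\in\Phi_{L,C}$ with base points $\{p_1,p_2,p_1'\}$ and $\phi_4\in\Phi_{L,C}$ with base points $\{p_1,p_1',p_2'\}$. Such maps exist because any three non-collinear points off $L$ serve as the base points of some standard quadratic transformation, which we post-compose with an element of $\PGL_3$ sending the resulting image conic onto $C$. The consecutive pairs in the chain then share two domain base points, and the key claim yields the desired three-term factorization of $\phi_2\phi_1^{-1}$. The main obstacle is arranging the various genericity hypotheses: non-collinearity of the triples $\{p_1,p_2,p_1'\}$ and $\{p_1,p_1',p_2'\}$ in $\p^2\setminus L$, and, at each of the three local steps, the ``new'' base point avoiding $L$ and the three contracted lines of the previous map. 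These are open conditions, and when they fail I expect to recover them by permuting the labels of the base points of $\phi_1$ and $\phi_2$ (giving three symmetric choices of shared pair at each stage), reducing the remaining verification to an elementary case analysis.
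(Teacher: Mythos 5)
Your core mechanism is the same as the paper's: interpolate between $\phi_1$ and $\phi_2$ by a chain of maps in $\Phi_{L,C}$ that change one base point at a time, and telescope, so that each factor is a quadratic map in $\Dec(C)$ whose base points are all proper because consecutive maps share exactly two base points and the relevant triples are non-collinear. Your ``key local claim'' is a correct (if more detailed than the paper's one-line justification) account of why each factor is an elementary quadratic transformation preserving $C$.

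The genuine gap is in your treatment of degenerate configurations, which you flag as the ``main obstacle'' but do not resolve. Permuting the labels of the base points of $\phi_1$ and $\phi_2$ does not suffice. For instance, if $\phi_1$ and $\phi_2$ have identical base point sets (so that $\phi_2\phi_1^{-1}$ is a nontrivial linear map in $\Dec(C)$, which the lemma must still cover), every relabelling produces a repeated point and the chain collapses; more generally one must also exclude coincidences $p_i=p_j'$ and collinearities among all six points, and relabelling merely permutes a finite set of possibly equally bad options. The paper's fix is different and robust: choose a third auxiliary map $\phi_3\in\Phi_{L,C}$ whose three base points are in general position with respect to those of both $\phi_1$ and $\phi_2$ --- possible because the conditions to avoid are finitely many points and lines --- write $\phi_2\phi_1^{-1}=(\phi_2\phi_3^{-1})(\phi_3\phi_1^{-1})$, and run the generic-case chain on each factor separately. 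With that reduction in place, the single blanket hypothesis that no two of the six points coincide and no three are collinear simultaneously guarantees the existence of the intermediate maps, their membership in $\Phi_{L,C}$, and that each new base point avoids the lines contracted by the previous map, so your separate list of open conditions becomes automatic. You should replace the relabelling step with this reduction; the rest of your argument then goes through.
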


\begin{proof}
For $i=1,2$, we let $P_i,Q_i,R_i$ be the base points of $\phi_i$, none of which lie on $L$. We may assume that these six points are in general position, i.e.\ that no points coincide and that no three points are collinear, as in Figure~\ref{conicLemmas}(i). If this is not the case, choose a third map $\phi_3\in\Phi_{L,C}$ whose base points are in general position with respect to both $\phi_1$ and $\phi_2$. Then we can write $\phi_2\phi_1^{-1} = (\phi_2\phi_3^{-1})(\phi_3\phi_1^{-1})$ and decompose each of $\phi_2\phi_3^{-1}$ and $\phi_3\phi_1^{-1}$ into elementary quadratic transformations inside $\Dec(C)$.

We let $\phi_1=:\psi_0,\psi_1,\psi_2,\psi_3:=\phi_2\in\Phi_{L,C}$ be a sequence of elementary quadratic transformations with base points:
\[ (P_1,Q_1,R_1), \; (P_1,Q_1,R_2), \; (P_1,Q_2,R_2), \; (P_2,Q_2,R_2) \]
and we write $\phi_2\phi_1^{-1} = (\psi_3\psi_2^{-1})(\psi_2\psi_1^{-1})
(\psi_1\psi_0^{-1})$.

By our assumption, $\psi_1$ and $\psi_2$ exist since no three points are collinear and we can take $\psi_1,\psi_2\in\Phi_{L,C}$ since none of these points lie on $L$. Moreover $\psi_{i+1}\psi_i^{-1}\in\Dec(C)$ is an elementary quadratic transformation for $i=0,1,2$ since $\psi_i$ and $\psi_{i+1}$ share exactly two common base points and no three base points are collinear.
\end{proof}

\begin{figure}[h]
\begin{center}
\begin{tikzpicture}[scale=1.3]
	\node at (-1.5,1) {(i)};
	\draw (0,0)--(2,2);
	\node at (0,1.5) [label={left:$P_1$}]{$\bullet$};
	\node at (1.5,0) [label={right:$Q_1$}]{$\bullet$};
	\node at (1,2) [label={above:$R_1$}]{$\bullet$};
	\draw[dashed] (0,1.5) -- (1.5,0) -- (1,2) -- cycle;
	\node at (0,1) [label={left:$P_2$}]{$\bullet$};
	\node at (2,0.5) [label={right:$Q_2$}]{$\bullet$};
	\node at (2,1.5) [label={right:$R_2$}]{$\bullet$};
	\draw[dashed] (0,1) -- (2,0.5) -- (2,1.5) -- cycle;
	
	\node at (4.5,1) {(ii)};
	\draw[thick] (6,0)--(8,2);
	\node at (7.25,1.75) [label={above:$P$}]{$\bullet$};
	\node at (8,0) [label={right:$Q$}]{$\bullet$};
	\node at (7,1) [label={left:$R$}]{$\bullet$};
	\node at (6,0.5) [label={left:$S$}]{$\bullet$};
	\draw[dashed] (7.25,1.75) -- (8,0) -- (7,1) -- cycle;
\end{tikzpicture}
\caption{Configuration of base points in proof of (i) Lemma~\ref{BforConics} and (ii) Lemma~\ref{CforConics}. }
\label{conicLemmas}
\end{center}
\end{figure}
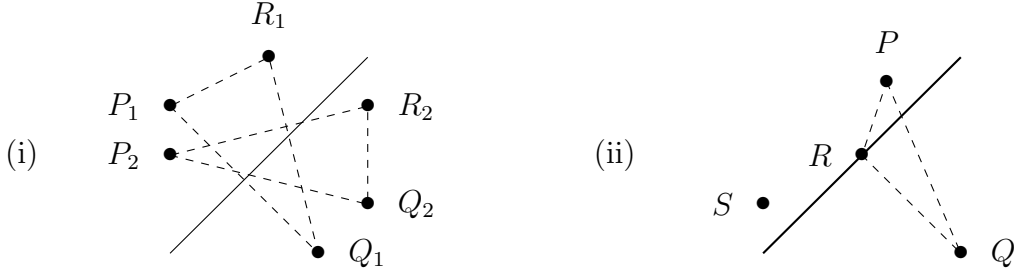

\vspace{-.2cm}
\subsubsection{Proof of statement (C) for conics.}

In fact we prove a stronger statement than statement (C) (since $\id_{\p^2}$ is a decomposition into zero elementary quadratic transformations in $\Dec(C)$).

\begin{lemma} \label{CforConics}
Let $\tau\in\Dec(L)$ be an elementary quadratic transformation. Then we can find $\phi,\psi \in\Phi_{L,C}$ such that $\phi \tau \psi^{-1}=\id_{\p^2}$.
\end{lemma}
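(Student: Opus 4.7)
The plan is to construct elementary quadratic maps $\phi, \psi \in \Phi_{L,C}$ satisfying $\phi = \psi \tau^{-1}$, so that $\phi \tau \psi^{-1} = \id$ holds automatically. The idea is to arrange $\psi$'s base points to include two of the base points of $\tau$, producing a degree-$2$ cancellation in the composition $\psi \tau^{-1}$ which forces $\phi$ also to be an elementary quadratic in $\Phi_{L,C}$.

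First I would analyse the base-point configuration of $\tau$. Since $\tau|_L$ restricts to a birational self-map of $L \cong \p^1$, the line $L$ must contain exactly one base point of $\tau$: it cannot contain two (else $L$ would be a contracted line) and it cannot contain zero (else $\tau(L)$ would be a conic). So label the base points of $\tau$ as $A, B, C$ with $A \in L$ and $B, C \notin L$, and those of $\tau^{-1}$ as $A', B', C'$ with $A' \in L$ and $B', C' \notin L$. Recall that $\tau$ contracts the line $BC$ onto $A'$, the line $AC$ onto $B'$, and the line $AB$ onto $C'$, and symmetrically $\tau^{-1}$ contracts $B'C'$, $A'C'$, $A'B'$ onto $A, B, C$ respectively.

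Next I would pick $S \in \p^2$ generic---avoiding $L$, the three lines $BC$, $AC$, $AB$, and the conic $\tau^{-1}(B'C')$ through $A, B, C$. These are finitely many proper subvarieties, so such an $S$ exists. By the remark at the start of this section, there is an elementary quadratic $\psi \in \Phi_{L,C}$ with base points $B, C, S$. Define $\phi := \psi \tau^{-1}$. Since $\tau^{-1}$ preserves $L$, we have $\phi(L) = \psi(\tau^{-1}(L)) = \psi(L) = C$, so it remains only to verify that $\phi$ is an elementary quadratic whose three base points lie off $L$.

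The main step is the degree-drop and base-point identification for $\phi$. The crucial observation is that the base points $B, C$ of $\psi$ coincide with the two points to which $\tau^{-1}$ contracts its exceptional lines $A'C'$ and $A'B'$, so on a common resolution of $\tau^{-1}$ and $\psi$ the composition $\psi\tau^{-1}$ acquires two cancellations and has degree $4 - 2 = 2$. This is also visible in explicit coordinates: after moving $\tau$'s base points to the three coordinate points (with the one on $L$ being the first), write $\tau = \lambda \circ \sigma$ with $\sigma = [yz : xz : xy]$ the standard Cremona and $\lambda \in \PGL_3$ preserving $L$; then the three components of $\psi \circ \tau^{-1}$ share a common degree-$2$ factor which cancels, leaving an elementary quadratic whose base points are $B', C', T$ with $T := \tau(S)$. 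The genericity of $S$ then ensures that $T \notin L$ (since $\tau$ preserves $L$), that $T \neq A', B', C'$ (since $S$ avoids $BC$, $AC$, $AB$), and that $B', C', T$ are non-collinear (since $S$ avoids the conic $\tau^{-1}(B'C')$), so $\phi \in \Phi_{L,C}$. The main obstacle will be executing the degree drop carefully and pinning down the third base point as $T = \tau(S)$; once this structural picture is in place, the genericity checks are immediate.
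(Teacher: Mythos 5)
Your construction is exactly the paper's: take $\psi$ to be an elementary quadratic map in $\Phi_{L,C}$ whose base points are the two base points of $\tau$ lying off $L$ together with a general point $S$, set $\phi := \psi\tau^{-1}$, and observe that sharing exactly two base points (with no three of the relevant points collinear) forces $\phi$ to be an elementary quadratic transformation in $\Phi_{L,C}$, so that $\phi\tau\psi^{-1}=\id_{\p^2}$. The only difference is that you spell out the degree-drop and identify the base points of $\phi$ as $B'$, $C'$, $\tau(S)$, which the paper invokes as a standard fact; note also that your extra genericity condition on $S$ (avoiding $\tau^{-1}(\overline{B'C'})$, which is in fact the degenerate conic $\overline{AB}\cup\overline{AC}$) is already implied by avoiding the lines through pairs of base points of $\tau$.
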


\begin{proof}
Let $P,Q,R$ be the base points of $\tau$, where $P,Q\notin L$ and $R\in L$. Choose a point $S\notin L$ as in Figure~\ref{conicLemmas}(ii), such that no three of $P,Q,R,S$ are collinear. 

Since $P,Q,S$ are non-collinear we let $\psi\in\Phi_{L,C}$ be an elementary quadratic transformation with these base points. Then $\phi:=\psi \tau^{-1}\in\Phi_{L,C}$ is also an elementary quadratic transformation since $\psi$ and $\tau$ share two base points and no three base points are collinear. Thus $\phi \tau \psi^{-1} = \id_{\PP^2}$.
\end{proof}

\subsection{A generating set for $\Dec(C)$} \label{generatorsForDecC}

It was shown in \cite{hedzim} that, for $L\subset\PP^2$ a line, $\Dec(L)$ can be generated by $\Aut(\PP^2,L)$ and any one elementary quadratic transformation $\sigma\in\Dec(L)$. This is because $\Aut(\PP^2,L)$ is still large enough to act transitively on the set:
\[ B = \big\{ (P,Q,R)\in (\PP^2)^3 \mid P\in L \text{ and }Q,R\notin L \text{ non-collinear} \big\} \]
of all possible base points for $\sigma$. For the conic $C\subset \PP^2$, even though the analogous action of $\Aut(\PP^2,C)$ is no longer transitive, it is still true that $\Dec(C)$ can be generated by $\Aut(\PP^2,C)$ and a suitably general elementary quadratic transformation $\sigma\in\Dec(C)$. 

We fix a model $C = V\big(xz - y^2\big)\subset \PP^2$ in order to describe $\Aut(\PP^2,C)$.

\begin{lemma} \label{linearMapsInDecC}
$\Aut(\PP^2,C)$ is given by:
\[ \Aut(\PP^2,C) = \left\{ \begin{pmatrix} a^2 & 2ab & b^2 \\ ac & ad+bc & bd \\ c^2 & 2cd & d^2  \end{pmatrix} \in\PGL_3 \,\middle| \,  ad-bc \neq 0 \right\} \simeq \PGL_2\,.  \]
In particular any $\alpha\in\PGL_2=\Bir(C)$ extends uniquely to a linear map in $\Aut(\PP^2,C)$.
\end{lemma}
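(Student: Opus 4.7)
The plan is to identify $C$ with $\PP^1$ via the Veronese embedding and lift the $\PGL_2$-action.

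First, I would parametrize $C = V(xz-y^2)$ by the degree-two Veronese map
\[ \nu\colon \PP^1 \longto C\subset \PP^2,\qquad [s:t]\longmapsto [s^2:st:t^2]. \]
This is an isomorphism, so pulling back gives an identification $\Aut(C) \cong \Aut(\PP^1) = \PGL_2$. Given any $g = \bigl(\begin{smallmatrix} a & b \\ c & d \end{smallmatrix}\bigr)\in \PGL_2$, I would directly compute
\[ \nu\bigl(g\cdot[s:t]\bigr) = \bigl[(as+bt)^2 : (as+bt)(cs+dt) : (cs+dt)^2\bigr] \]
and expand each coordinate as a quadratic form in $(s,t)$. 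The coefficients are precisely the rows of the matrix $M_g$ displayed in the statement, so the composition $\nu\circ g$ factors as $M_g\circ \nu$. Hence $M_g$ restricts to $g$ on $C$ and so $M_g\in \Aut(\PP^2,C)$.

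Next I would check that the assignment $g\mapsto M_g$ is a group homomorphism $\PGL_2\to \Aut(\PP^2,C)$. This follows because both $M_{g_1 g_2}$ and $M_{g_1}M_{g_2}$ make the relation $\nu\circ g_1 g_2 = M_{?}\circ \nu$ hold, and a linear map of $\PP^2$ is determined by its values on the image of $\nu$ (since $C$ is not contained in any line, so $C$ contains five points in general linear position).

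For the reverse direction, consider the restriction homomorphism $r\colon \Aut(\PP^2,C)\to \Aut(C)\cong \PGL_2$. The same remark—that $C$ spans $\PP^2$ and no three of its points are collinear—shows that $r$ is injective: any linear map fixing $C$ pointwise is determined by its action on five independent points. Since $r\circ (g\mapsto M_g)=\id_{\PGL_2}$ by construction, $r$ is also surjective and hence an isomorphism, with inverse $g\mapsto M_g$. This gives both the claimed matrix description of $\Aut(\PP^2,C)$ and the unique extendability of any $\alpha\in\PGL_2=\Bir(C)$ to a linear map in $\Aut(\PP^2,C)$.

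The whole argument is elementary; there is no real obstacle beyond carrying out the expansion of $(as+bt)^2$, $(as+bt)(cs+dt)$, $(cs+dt)^2$ and invoking the standard fact that a smooth conic imposes independent linear conditions on $\PGL_3$.
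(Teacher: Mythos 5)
Your proof is correct and complete: the paper in fact states this lemma without proof, and the Veronese computation you carry out (expanding $(as+bt)^2$, $(as+bt)(cs+dt)$, $(cs+dt)^2$ to read off the symmetric-square matrix, then using that four points of $C$ in general position pin down an element of $\PGL_3$ to get injectivity of the restriction map) is exactly the standard argument the authors are implicitly relying on. One small point worth noting, since the paper works in arbitrary characteristic in the conic section: the displayed matrix has determinant $(ad-bc)^3$, so the condition $ad-bc\neq 0$ guarantees invertibility even in characteristic $2$, where the entries $2ab$ and $2cd$ vanish.
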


It follows from Lemma~\ref{linearMapsInDecC} that $\Ine(C)\cap \PGL_3=\langle\,\id_{\PP^2}\,\rangle$. Moreover the sequence
\[  1\to \Ine(C) \to \Dec(C) \to \PGL_2 \to 1  \]
is exact and $\Dec(C) = \Ine(C)\rtimes \PGL_2$ is a semidirect product, where $\PGL_2$ acts on $\Ine(C)$ by conjugation.

\begin{proposition} \label{countablyGeneratedProp}
$\Dec(C) = \langle \, \Aut(\PP^2,C), \, \sigma \, \rangle$ for any elementary quadratic transformation $\sigma$ which does not contract a tangent line to $C$.
\end{proposition}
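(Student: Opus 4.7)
By Theorem~\ref{conicTheorem} every element of $\Dec(C)$ is a product of elementary quadratic transformations in $\Dec(C)$, so it suffices to show that every such quadratic $\tau\in\Dec(C)$ lies in $G:=\langle\Aut(\PP^2,C),\sigma\rangle$. A short degree count (the image $\tau(C)$ has degree $2\deg C-|\{i:Q_i\in C\}|=4-|\{i:Q_i\in C\}|$, which must equal $2$) shows that any such $\tau$ has exactly two base points on $C$ and one off $C$; write them as $Q_1,Q_2\in C$ and $Q_3\notin C$, and denote the base points of $\sigma$ similarly by $P_1,P_2\in C$ and $P_3\notin C$.

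The key reduction is that two elementary quadratic transformations with the same three (non-collinear) base points differ by a linear map: both are determined by the same linear system of conics through the three points, up to an automorphism of the target $\PP^2$. Hence if we can produce some $\phi\in G$ which is an elementary quadratic with base points $(Q_1,Q_2,Q_3)$, then $\tau\phi^{-1}\in\Dec(C)\cap\PGL_3=\Aut(\PP^2,C)\subset G$ and so $\tau\in G$. The task reduces to realising every valid triple $(Q_1,Q_2,Q_3)$ as the base points of some element of $G$.

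For $\alpha,\beta\in\Aut(\PP^2,C)$ the base points of $\alpha\sigma\beta\in G$ are $\beta^{-1}(P_1),\beta^{-1}(P_2),\beta^{-1}(P_3)$. Using the 2-transitivity of $\Aut(\PP^2,C)\simeq\PGL_2$ on $C$ one may arrange $\{\beta^{-1}(P_1),\beta^{-1}(P_2)\}=\{Q_1,Q_2\}$, but $\beta^{-1}(P_3)$ then sweeps only a 1-parameter curve in $\PP^2\setminus C$, insufficient to reach an arbitrary $Q_3$. The remedy is to form $\phi:=\phi_1\phi_2^{-1}\in G$ from two conjugates $\phi_i=\alpha_i\sigma\beta_i\in G$ sharing the two base points $\{Q_1,Q_2\}$; by the argument in the proof of Lemma~\ref{BforConics}, $\phi$ is again an elementary quadratic transformation in $\Dec(C)$. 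Tracing through the composition shows that the two base points of $\phi$ lying on $C$ are $\alpha_2(\sigma(P_1^*))$ and $\alpha_2(\sigma(P_2^*))$---where $P_i^*$ denotes the second intersection of $\overline{P_iP_3}$ with $C$, which is well-defined and distinct from $P_i$ precisely because $\sigma$ does not contract a tangent---and the third base point of $\phi$ is $\alpha_2\sigma\beta_2\beta_1^{-1}(P_3)$, off $C$. The remaining 1-parameter freedom in $\alpha_2$ (the stabiliser in $\PGL_2$ of the pair $\{\sigma(P_1^*),\sigma(P_2^*)\}$, used to send this pair to $\{Q_1,Q_2\}$) combined with the 1-parameter freedom in $\gamma:=\beta_2\beta_1^{-1}$ (the stabiliser of $\{P_1,P_2\}$) produces a 2-dimensional family of admissible third base points, generically covering the open locus $\PP^2\setminus(C\cup\overline{Q_1Q_2})$.

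The main obstacle is to carry out the resolution of $\phi_1\phi_2^{-1}$ carefully, verifying that after blowing up the four distinct base points of $\phi_1$ and $\phi_2$ in the source and contracting the shared strict transform of $\overline{Q_1Q_2}$, one obtains an elementary quadratic whose base points are exactly the three points described above, and that the resulting 2-parameter map into the space of third base points is dominant onto $\PP^2\setminus(C\cup\overline{Q_1Q_2})$. Once this is established, choosing the parameters so that the third base point equals $Q_3$, the linear correction $\tau\phi^{-1}\in\Aut(\PP^2,C)$ completes the proof.
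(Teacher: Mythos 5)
Your setup is sound and matches the paper's implicit structure: reduce via Theorem~\ref{conicTheorem} to elementary quadratics, note that two elementary quadratics in $\Dec(C)$ with the same base points differ by an element of $\Aut(\PP^2,C)$, and then try to realise base-point triples using words in $\sigma$ and $\Aut(\PP^2,C)$. Your composition $\phi_1\phi_2^{-1}$ of two conjugates of $\sigma$ sharing their two base points on $C$ is in fact exactly the paper's construction: the map $\mu_c^{-1}\sigma_{a,b}\mu_c\,\sigma_{a,b}^{-1}$ in the paper's proof is $\phi_1\phi_2^{-1}$ with $\phi_2=\sigma_{a,b}$ and $\phi_1=\mu_c^{-1}\sigma_{a,b}\mu_c$, where $\mu_c$ lies in the stabiliser of the two base points on $C$ (your $\gamma$). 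Your identification of the base points of the composition, and your use of the no-tangent hypothesis to make $P_i^*$ well defined, are also correct.

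The genuine gap is the final covering claim, which you yourself flag as ``the main obstacle'' but do not resolve, and it is not a routine verification. Since you may always post-compose with an element of $\Stab(\{Q_1,Q_2\})\subset\Aut(\PP^2,C)$, the set of reachable third base points is automatically a union of $1$-dimensional orbits of that stabiliser --- in the paper's coordinates, the conics $\Gamma_d$ of the pencil $V(xz-dy^2)$ together with the degenerate pieces $\Gamma_{1,0}$, $\Gamma_{0,1}$, $\{(0:1:0)\}$. A $2$-parameter family that is merely \emph{dominant} onto $\PP^2\setminus(C\cup\overline{Q_1Q_2})$ can still miss finitely many of these orbits, and the orbits most at risk are precisely the degenerate ones $B_{1,0}$, $B_{0,1}$, $B_{0,0}$, corresponding to quadratics that contract one or two tangent lines to $C$. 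These \emph{must} be reached, because the arbitrary $\tau\in\Dec(C)$ you start with is allowed to contract tangent lines (only $\sigma$ is assumed not to). The paper's proof devotes its entire second half to exactly this point: it computes explicitly that the third base point sweeps the line $L_{a,b}$ minus three specified points, checks by hand which orbits $B_d$, $B_{1,0}$, $B_{0,1}$, $B_{0,0}$ this line meets (the answer depends on whether $ab=-1$), and then applies the construction a second time to produce a quadratic $\sigma_{a',b'}$ from which the leftover orbits can be reached. Without this explicit analysis --- or some substitute argument showing every stabiliser orbit, including the three degenerate ones, is actually hit --- your proof is incomplete. A secondary simplification you miss is that one never needs to realise the exact triple $(Q_1,Q_2,Q_3)$, only some triple in its $\Aut(\PP^2,C)$-orbit; this reduces the problem to hitting each orbit once, which is why the paper can get away with a $1$-parameter family.
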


\begin{proof}
Let $\tau\in\Dec(C)$ be an elementary quadratic transformation and consider the action of $\PGL_2\simeq\Aut(\PP^2,C)$ on the set: 
\[ B = \{ (P,Q,R)\in(\PP^2)^3 \mid P,Q\in C \text{ and } R\notin C \text{ non-collinear} \} \] 
of all possible base points for $\tau$. If $P,Q\in C$ and $R\notin C$ are the (ordered) base points of $\tau$ then, by an element of $\PGL_2$, we can send $P\mapsto(1:0:0)$, $Q\mapsto(0:0:1)$ and $R$ to a point in the conic $\Gamma_{d} = V(xz - dy^2)$ for a uniquely determined $1\neq d\in \k$. Write $B = \bigcup_{d\in \k\setminus 1} B_{d}$, a decomposition into $\PGL_2$-invariant sets according to this pencil of conics $\Gamma_d$. The sets $B_d$ with $d\neq0$ are all $\PGL_2$-orbits. For the degenerate conic $\Gamma_0$ the set $B_0$ splits into three $\PGL_2$-orbits $B_0 = B_{1,0}\cup B_{0,1}\cup B_{0,0}$ according to the cases:
\[ R\in \Gamma_{1,0} := \{(t:1:0) \mid t\neq 0\}, \quad R\in \Gamma_{0,1} := \{(0:1:t) \mid t\neq 0\}, \quad R = (0:1:0)\,. \] 
As shown in Figure~\ref{threeCases}, these three orbits correspond to the cases where one or two of the lines contracted by $\tau$ are tangent to $C$. 
\begin{figure}[h]
\begin{center}
\begin{tikzpicture}[scale=1]
	\draw (2,2) circle (1cm);
	\draw[thick] (0.8,2.7) -- (3.6,3);
	\draw[thick] (3.44,3.3) -- (2.39,0.8);
	\node at (1.35,2.75) {$\bullet$};
	\node at (2.55,1.15) {$\bullet$};
	\node at (3.3,2.96) {$\bullet$};
	\node at (0,2) {(i)};
	
	\draw (7,2) circle (1cm);
	\draw[thick] (5.8,2.7) -- (8.6,3);
	\draw[thick] (8.44,3.3) -- (7.69,0.8);
	\node at (6.35,2.75) {$\bullet$};
	\node at (8,1.75) {$\bullet$};
	\node at (8.35,2.975) {$\bullet$};
	\node at (5,2) {(ii)};
	
	\draw (12,2) circle (1cm);
	\draw[thick] (10.8,3) -- (13.6,3);
	\draw[thick] (13.44,3.3) -- (12.69,0.8);
	\node at (12,3) {$\bullet$};
	\node at (13,1.75) {$\bullet$};
	\node at (13.35,3) {$\bullet$};
	\node at (10,2) {(iii)};
\end{tikzpicture}
\caption{The base points of $\tau$ belonging to the orbit (i) $B_{d}$ with $d\neq0$, (ii) $B_{1,0}$ or $B_{0,1}$, (iii) $B_{0,0}$.}
\label{threeCases}
\end{center}
\end{figure}
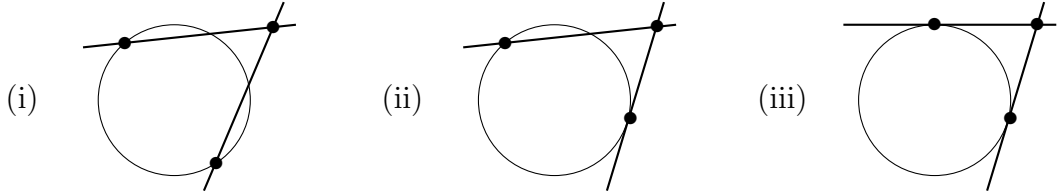

Let $\sigma_{a,b}\in\Dec(C)$ be an elementary quadratic transformation with base points $(1:0:0)$, $(0:0:1)$ and $(a:1:b)$ belonging to an orbit $B_{ab}$ with $ab\neq0$. By composing with a suitable linear map we can assume the map is actually in $\Ine(C)$, in which case $\sigma_{a,b}$ is uniquely determined and given by: 
\[ \sigma_{a,b} = \Big( \big(1-ab\big)xy + a\big(xz-y^2\big) : xz - aby^2 : \big(1-ab\big)yz + b \big(xz - y^2\big) \Big)\,.  \]

Any elementary quadratic transformation $\sigma\in\Dec(C)$ which does not contract a tangent line to $C$ has base points belonging to the same $\PGL_2$-orbit as $\sigma_{a,b}$ for some $a,b\in\k$ with $ab\neq0,1$. Therefore, to prove the proposition, it is enough to show that given any $a,b\in\k$ with $ab\neq0,1$, we can use $\sigma_{a,b}$ to generate at least one elementary quadratic transformation with base points belonging to any other $\PGL_2$-orbit.

Consider the linear map: 
\[ \lambda_{a,b} =  (x+2ay+a^2z : bx+(1+ab)y+az : b^2x+2by+z) \]
and, for $c\neq 0,1,\infty$, the diagonal map $\mu_c = (c^2x : cy : z)$. Since $ab\neq0$ we get the formula:
\[ \sigma_{a',b'} = \lambda_{a,b}^{-1} \, \mu_c^{-1} \, \sigma_{a,b} \, \mu_c \, \sigma_{a,b}^{-1} \, \lambda_{a,b} \] 
where $a'=\tfrac{1 - abc}{b(c-1)}$ and $b' = \tfrac{ab - c}{a(c-1)}$.

As $c$ varies the base points of $\sigma_{a',b'}$ are $(1:0:0)$, $(0:0:1)$ and the point $R'=\left(a(1-abc):ab(c-1):b(ab-c)\right)$ lying on the line:
\[ L_{a,b} = V\big(bx + (1+ab)y + az\big).\, \]
The point $R'$ can be any point on $L_{a,b}$, except for $(a:0:-b)$, corresponding to $c=1$, and $L_{a,b}\cap C = \{ (-\tfrac{1}{b}:1:-b), (-a:1:-\tfrac{1}{a}) \}$, corresponding to $c=0,\infty$. Outside of these points $L_{a,b}$ intersects every conic $\Gamma_{d}$ at least once. 

For all $d\neq0$ this construction gives an elementary quadratic transformation with base points in $B_d$. 

If $d=0$ and $ab\neq -1$ then $L_{a,b}$ meets $\Gamma_{1,0}$ and $\Gamma_{0,1}$ giving elementary quadratic transformations with base points in $B_{1,0}$ and $B_{0,1}$. If $ab = -1$ then $L_{a,b}\cap\Gamma_0=(0:1:0)$ giving an elementary quadratic transformation with base points in $B_{0,0}$.

It remains to produce an elementary quadratic transformation with base points in $B_{0,0}$ if $ab\neq -1$ and in $B_{1,0}$ and $B_{0,1}$ if $ab=-1$. We can use the construction once to produce $\sigma_{a',b'}$ with $a'b'=-1$ if $ab\neq -1$ (or with $a'b'\neq-1$ if $ab=-1$) and then proceed as above.
\end{proof}

\begin{remark} \label{uncountableIneC}
If the ground field $\k$ is uncountable then the corresponding statement for $\Ine(C)$ is not true, i.e.\ $\Ine(C)$ cannot be generated by linear maps and any countable collection of elementary quadratic maps. Although $\Ine(C)\cap\PGL_3$ is trivial, $\Ine(C)$ contains a lot of elementary quadratic transformations. Indeed the maps $$\{\sigma_{a,b}\in\Ine(C)\mid a,b\in \k, \, ab\neq1\}$$ appearing in the proof of Proposition~\ref{countablyGeneratedProp} give an uncountable family. 
\end{remark}

\section{The decomposition group of a rational cubic} \label{cubicSection}

Throughout this section we let $C\subset \PP^2$ denote a fixed conic and $X\subset \PP^2$ a rational cubic. We will distinguish between the nodal and cuspidal cases when necessary. As explained in Remark~\ref{notChar2}, we will also assume that the characteristic of $\k$ is not 2.

\begin{remark}
Any map $\phi\in\Phi_{C,X}$ must have exactly one base point $P\in C$ and two base points $Q,R\notin C$. In this case $X$ is a cuspidal cubic if the line $\overline{QR}$ is tangent to $C$ and a nodal cubic otherwise, as shown in Figure \ref{basePointConfigurations}. Moreover, given any three non-collinear points in such a position we can always find a map $\phi\in\Phi_{C,X}$ with these three points as base points.
\end{remark}
\begin{figure}[h]
\begin{center}
\begin{tikzpicture}[scale=1]
	\draw[thick] (0,0) circle (1cm);
	\node at (0,-1) [label = {below:$P$}]{$\bullet$};
	\node at (1.2,0.7) [label = {above:$R$}]{$\bullet$};
	\node at (-1.2,0.7) [label = {above:$Q$}]{$\bullet$};
	\draw[thick, dashed] (0,-1) -- (-1.2,0.7) -- (1.2,0.7) -- cycle;
	
	\draw[thick] (6,0) circle (1cm);
	\node at (6,-1) [label = {below:$P$}]{$\bullet$};
	\node at (7,1) [label = {above:$R$}]{$\bullet$};
	\node at (5,1) [label = {above:$Q$}]{$\bullet$};
	\draw[thick, dashed] (6,-1) -- (7,1) -- (5,1) -- cycle;
	
	\node at (-2,0) {(i)};
	\node at (4,0) {(ii)};
\end{tikzpicture}
\caption{Base point configurations for $\phi \in \Phi_{C,X}$ when $X$ is (i) a nodal cubic and (ii) a cuspidal cubic.}
\label{basePointConfigurations}
\end{center}
\end{figure}
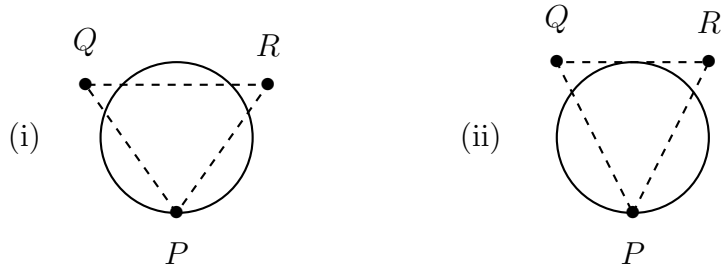

\subsection{Proof of Theorem \ref{cubicTheorem}}

We now prove statements (B) \& (C) in Proposition \ref{mainProp} for $Y=C$ a conic and $Z=X$ a rational cubic. 

\subsubsection{Proof of statement (B) for cubics}

\begin{lemma} \label{BforCubics}
Suppose that $\phi_1,\phi_2\in \Phi_{C,X}$. Then the composition $\phi_2\phi_1^{-1}\in\Dec(X)$ can be factored into elementary quadratic transformations inside $\Dec(X)$. 
\end{lemma}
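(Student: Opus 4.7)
The plan mimics the chain argument from Lemma~\ref{BforConics}. Write the base points of $\phi_i$ as $(P_i, Q_i, R_i)$ with $P_i \in C$ and $Q_i, R_i \notin C$. I would aim to exhibit a finite sequence $\phi_1 = \psi_0, \psi_1, \dots, \psi_n = \phi_2 \in \Phi_{C,X}$ in which each pair of consecutive triples shares exactly two base points and the four base points involved are in linearly general position; then each $\psi_{i+1}\psi_i^{-1}$ is an elementary quadratic transformation inside $\Dec(X)$, and telescoping factorises $\phi_2\phi_1^{-1}$ as required. As in the proof of Lemma~\ref{BforConics}, if $\phi_1, \phi_2$ happen to be in a special position preventing a direct construction I would first insert an auxiliary $\phi_3 \in \Phi_{C,X}$ in generic position and split $\phi_2\phi_1^{-1} = (\phi_2\phi_3^{-1})(\phi_3\phi_1^{-1})$.

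When $X$ is nodal, membership of a triple $(P,Q,R)$ in $\Phi_{C,X}$ is an open condition (on top of the prescribed position with respect to $C$, one only needs non-collinearity and that $\overline{QR}$ is not tangent to $C$). For generic $\phi_1, \phi_2$ the naive three-step chain
\[
(P_1,Q_1,R_1) \longrightarrow (P_1,Q_1,R_2) \longrightarrow (P_1,Q_2,R_2) \longrightarrow (P_2,Q_2,R_2)
\]
already stays in $\Phi_{C,X}$, and the required verifications proceed essentially as in Lemma~\ref{BforConics}.

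The cuspidal case is the main obstacle. Now the condition that $\overline{QR}$ be tangent to $C$ is codimension one, so a naive swap of a single off-$C$ base point typically destroys it. Moreover, one cannot simply slide a moving endpoint along the current tangent line $\ell$, because this would place three of the four base points of two consecutive triples on $\ell$ and spoil the elementary-quadratic condition on $\psi_{i+1}\psi_i^{-1}$. My plan is instead to force every step to \emph{switch tangent}: writing $\ell_Q$ for the second tangent to $C$ through a point $Q \notin C$, which is well-defined since $\mathrm{char}(\k)\neq 2$, at each step the moving endpoint is replaced by a point on the other tangent through the shared off-$C$ base point, so tangency is preserved while three collinear points are avoided. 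A chain of length five, bridging $\ell_1 = \overline{Q_1R_1}$ and $\ell_2 = \overline{Q_2R_2}$ via the intermediate tangents $\ell_{Q_1}, \ell_{Q_2}$ and the hinge point $W = \ell_1 \cap \ell_{Q_2}$, of shape
\[
(P_1,Q_1,R_1) \to (P_1,Q_1,R_1') \to (P_1,Q_1,W) \to (P_1,Q_2,W) \to (P_1,Q_2,R_2) \to (P_2,Q_2,R_2),
\]
should do the job, as one can check step by step that at each transition the shared off-$C$ point lies on two distinct tangents and the moving point hops between them. The open non-incidence conditions required for the construction (namely that $P_1, P_2, Q_1, Q_2$ and the chosen auxiliary points avoid a finite list of tangent lines to $C$) can be arranged by the auxiliary $\phi_3$ of the first paragraph; here the hypothesis $\mathrm{char}(\k)\neq 2$ enters essentially, guaranteeing that the list of tangent lines to avoid is indeed finite, as recorded in Remark~\ref{notChar2}.
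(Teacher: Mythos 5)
Your proposal is correct and follows essentially the same strategy as the paper: reduce to general position via an auxiliary $\phi_3$, run the naive chain in the nodal case, and in the cuspidal case maintain tangency by hopping the moving base point between the two tangents to $C$ through the shared off-conic point. The paper's only (cosmetic) difference is a more economical hinge: it takes $S = L_1\cap L_2$ where $L_i$ is the tangent through $Q_i$ \emph{not} containing $R_i$, which lets $R_1\to S$ happen in one step and yields a chain of length four instead of your five.
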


\begin{proof}
For $i=1,2$, we let $P_i,Q_i,R_i$ be the base points of $\phi_i$, where $P_i\in C$ and $Q_i,R_i\notin C$. As in the proof of Lemma \ref{BforConics}, we may intertwine with a third map $\phi_3\in\Phi_{C,X}$ to assume that no base points coincide, no three are collinear and no two lie on a tangent line to $C$ (unless $X$ is a cuspidal cubic, in which case we can assume that only $Q_1,R_1$ and $Q_2,R_2$ lie on a tangent line to $C$). 

{\it The nodal case:} If $X$ is a nodal cubic we let $\phi_1=:\psi_0,\psi_1,\psi_2,\psi_3:=\phi_2\in\Phi_{C,X}$ be a sequence of elementary quadratic transformations with base points:
\[ (P_1,Q_1,R_1), \; (P_1,Q_1,R_2), \; (P_1,Q_2,R_2), \; (P_2,Q_2,R_2) \]
and we write $\phi_2\phi_1^{-1} = (\psi_3\psi_2^{-1})(\psi_2\psi_1^{-1})(\psi_1\psi_0^{-1})$.

By our assumption $\psi_1$ and $\psi_2$ exist since each of these triples is non-collinear and $\psi_1,\psi_2\in\Phi_{C,X}$ since they both have precisely one base point on $C$ and do not contract any tangent line to $C$. Lastly each composition $\psi_{i+1}\psi_i^{-1}\in\Dec(X)$ is an elementary quadratic transformation since $\psi_i$ and $\psi_{i+1}$ share exactly two common base points and no three base points are collinear.

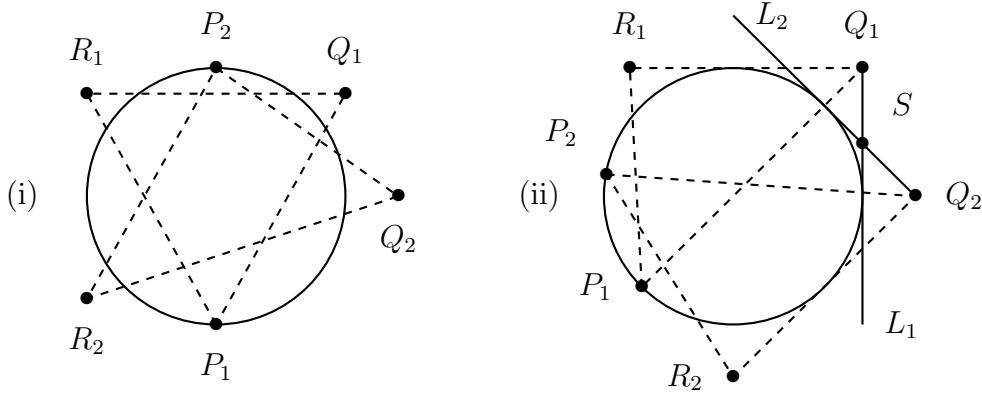
\begin{figure}[h]
\begin{center}
\begin{tikzpicture}[scale=1.7]
	\draw[thick] (0,0) circle (1cm);
	\node at (0,-1) [label={below:$P_1$}]{$\bullet$};
	\node at (1,0.8) [label={above:$Q_1$}]{$\bullet$};
	\node at (-1,0.8) [label={above:$R_1$}]{$\bullet$};
	\node at (0,1) [label={above:$P_2$}]{$\bullet$};
	\node at (1.41,0) [label={below:$Q_2$}]{$\bullet$};
	\node at (-1,-0.8) [label={below:$R_2$}]{$\bullet$};
	\draw[thick, dashed] (0,-1) -- (-1,0.8) -- (1,0.8) -- cycle;
	\draw[thick, dashed] (0,1) -- (1.41,0) -- (-1,-0.8) -- cycle;
	\node at (-1.5,0){(i)};
	
	\draw[thick] (4,0) circle (1cm);
	\node at (4-0.707,-0.707) [label={left:$P_1$}]{$\bullet$};
	\node at (5,1) [label={above:$Q_1$}]{$\bullet$};
	\node at (3.2,1) [label={above:$R_1$}]{$\bullet$};
	\node at (3.02,0.17) [label={135:$P_2$}]{$\bullet$};
	\node at (5.41,0) [label={right:$Q_2$}]{$\bullet$};
	\node at (4,-1.41) [label={left:$R_2$}]{$\bullet$};
	\draw[thick, dashed] (4-0.707,-0.707) -- (3.2,1) -- (5,1) -- cycle;
	\draw[thick] (5,1) -- (5,-1);
	\draw[thick, dashed] (3.02,0.17) -- (5.41,0) -- (4,-1.41) -- cycle;
	\draw[thick] (5.41,0) -- (4,1.41);
	\node at (5,0.41) [label={45:$S$}]{$\bullet$};
	\node at (5,-1) [label={right:$L_1$}]{};
	\node at (4,1.41) [label={right:$L_2$}]{};
	\node at (2.5,0){(ii)};
\end{tikzpicture}
\caption{Configuration of base points in (i) the nodal case and (ii) the cuspidal case.}
\label{default}
\end{center}
\end{figure} 

{\it The cuspidal case:} If $X$ is a cuspidal cubic then we must be a little bit more careful to ensure that each of our intermediate maps $\psi_i$ contracts a tangent line to $C$.

For $i=1,2$ let $L_i$ be the tangent line to $C$ passing through $Q_i$ which does not contain $R_i$. By our assumption on the position of the base points, the point $S = L_1\cap L_2$ is well-defined, $S\notin C$ and $S$ is not equal to any $P_i,Q_i,R_i$. Moreover, no three of the seven points $P_1,P_2,Q_1,Q_2,R_1,R_2,S$ are collinear.

Now we let $\phi_1=:\psi_0,\psi_1,\psi_2,\psi_3,\psi_4:=\phi_2\in\Phi_{C,X}$ be a sequence of elementary quadratic transformations with base points:
\[ (P_1,Q_1,R_1), \; (P_1,Q_1,S), \; (P_1,Q_2,S), \; (P_2,Q_2,S), \; (P_2,Q_2,R_2) \]
and we write $\phi_2\phi_1^{-1} = (\psi_4\psi_3^{-1})(\psi_3\psi_2^{-1})(\psi_2\psi_1^{-1})(\psi_1\psi_0^{-1})$.

As before, $\psi_1,\psi_2,\psi_3$ exist since each triple of base points is non-collinear and $\psi_1,\psi_2,\psi_3\in\Phi_{C,X}$ since they all have precisely one base point on $C$ and contract a tangent line to $C$. Lastly each composition $\psi_{i+1}\psi_i^{-1}\in\Dec(X)$ is an elementary quadratic transformation since $\psi_i$, $\psi_{i+1}$ share exactly two common base points and no three base points are collinear.
\end{proof}

\subsubsection{Proof of statement (C) for cubics}

\begin{lemma} \label{CforCubics}
Let $\tau\in\Dec(C)$ be an elementary quadratic transformation. Then we can find $\phi,\psi \in\Phi_{C,X}$ such that $\phi \tau \psi^{-1}\in\Dec(X)$ can be factored into elementary quadratic transformations inside $\Dec(X)$.
\end{lemma}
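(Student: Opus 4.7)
My plan is to mimic the proof of Lemma~\ref{CforConics}. Write the base points of $\tau$ as $P, Q \in C$ and $R \notin C$. I will look for a point $S$ such that the elementary quadratic transformation $\psi$ with base points $\{P, R, S\}$ belongs to $\Phi_{C, X}$, and then set $\phi := \psi \tau^{-1}$, yielding $\phi \tau \psi^{-1} = \id_{\p^2}$, which is a composition of zero elementary quadratic transformations in $\Dec(X)$.

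For this construction to succeed, $S$ must satisfy the following: $S \notin C$ and $P, R, S$ are non-collinear (so that $\psi$ is a genuine elementary quadratic transformation); the line $\overline{RS}$ is tangent to $C$ precisely when $X$ is cuspidal (so that $\psi \in \Phi_{C, X}$ by the opening remark of this section); and no three of the four points $\{P, Q, R, S\}$ are collinear (so that $\phi = \psi \tau^{-1}$, which shares the two base points $\{P, R\}$ with $\tau$, is again an elementary quadratic transformation). Under these conditions $\phi$ automatically lies in $\Phi_{C, X}$, since $\phi(C) = \psi(\tau^{-1}(C)) = \psi(C) = X$ and the base-point configuration of an elementary quadratic transformation sending $C$ birationally onto the cubic $X$ is then forced by the type of $X$.

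In the nodal case these constraints amount to avoiding $C$, finitely many points, and finitely many lines (including the two tangent lines to $C$ through $R$), so a suitable $S$ exists over our algebraically closed ground field. In the cuspidal case one further requires $S$ to lie on one of the two tangent lines $T_1, T_2$ to $C$ through $R$ (which are distinct since $\car(\k) \neq 2$); after excluding finitely many points, a valid $S$ on $T_1 \cup T_2$ is generically available.

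The principal obstacle is a degenerate sub-case of the cuspidal setting: when $\tau$ lies in the orbit $B_{0,0}$ of Proposition~\ref{countablyGeneratedProp}, i.e.\ when $R$ is the pole of $\overline{PQ}$ with respect to $C$. Then both tangents to $C$ through $R$ coincide with the contracted lines $\overline{PR}$ and $\overline{QR}$ of $\tau$, and every candidate $S$ on a tangent through $R$ forces three of $\{P, Q, R, S\}$ to be collinear, so the direct construction breaks down. I anticipate handling this exceptional case by a separate argument: for instance, one can factor $\tau$ using the conjugation identity from the proof of Proposition~\ref{countablyGeneratedProp} into elementary quadratic transformations in a non-degenerate orbit together with elements of $\Aut(\p^2, C)$, apply the direct construction to each generic factor, and combine the resulting pieces in $\Dec(X)$ via Lemma~\ref{BforCubics}.
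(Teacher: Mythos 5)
Your proposal is correct and follows essentially the same strategy as the paper: a direct construction of $\psi$ via a well-chosen auxiliary point $S$ (giving $\phi\tau\psi^{-1}=\id_{\p^2}$), combined with a fallback that factors the remaining $\tau$ through Proposition~\ref{countablyGeneratedProp} and glues the pieces using Lemma~\ref{BforCubics}. The only (harmless) difference is in where you draw the line between the two regimes: the paper invokes the fallback whenever $\tau$ contracts a tangent line to $C$, whereas you push the direct construction through the orbits $B_{1,0}$ and $B_{0,1}$ as well and reserve the fallback for the single genuinely degenerate cuspidal case $B_{0,0}$, using the observation that $\phi=\psi\tau^{-1}\in\Phi_{C,X}$ is automatic once it is elementary quadratic with $\phi(C)=X$.
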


\begin{proof}
We first assume that $\tau$ is an elementary quadratic transformations which does not contract a tangent line to $C$ (i.e.\ $\tau$ has a configuration of base points as in Figure~\ref{threeCases}(i)). Let $P,Q\in C$ and $R\notin C$ be the base points of $\tau$ and let $L$ be a tangent line to $C$ passing through $R$. By assumption $L\neq\overline{PR},\overline{QR}$.

Choose a point $S\notin C$ as in Figure \ref{choiceOfS}, such that no three of $P,Q,R,S$ are collinear. If $X$ is a nodal cubic then we choose $S$ to avoid the tangent lines to $C$ passing through $P$, $Q$ or $R$. If $X$ is a cuspidal cubic then we choose $S$ to lie on $L$ but avoid the tangent lines to $C$ through $P$ or $Q$.
\begin{figure}[h]
\begin{center}
\begin{tikzpicture}[scale=1.3]
	\draw (2,2) circle (1cm);
	\draw[thick] (0.8,2.7) -- (3.6,3);
	\draw[thick] (3.44,3.3) -- (2.39,0.8);
	\node at (1.35,2.75) [label={above:$P$}]{$\bullet$};
	\node at (2.55,1.15) [label={right:$Q$}]{$\bullet$};
	\node at (3.3,2.96) [label={45:$R$}]{$\bullet$};
	\node at (0.7,1.5) [label={left:$S$}]{$\bullet$};
	\node at (0,2) {(i)};
	
	\draw (8,2) circle (1cm);
	\draw[thick] (6.8,2.7) -- (9.6,3);
	\draw[thick] (9.44,3.3) -- (8.39,0.8);
	\draw[thick, dashed] (9.3,3) -- (6,3);
	\node at (7.35,2.75) [label={below:$P$}]{$\bullet$};
	\node at (8.55,1.15) [label={right:$Q$}]{$\bullet$};
	\node at (9.3,2.96) [label={45:$R$}]{$\bullet$};
	\node at (6.2,3) [label={above:$S$}]{$\bullet$};
	\node at (6,2) {(ii)};
\end{tikzpicture}
\caption{Location of the point $S$ when $X$ is (i) a nodal cubic and (ii) a cuspidal cubic. }
\label{choiceOfS}
\end{center}
\end{figure}
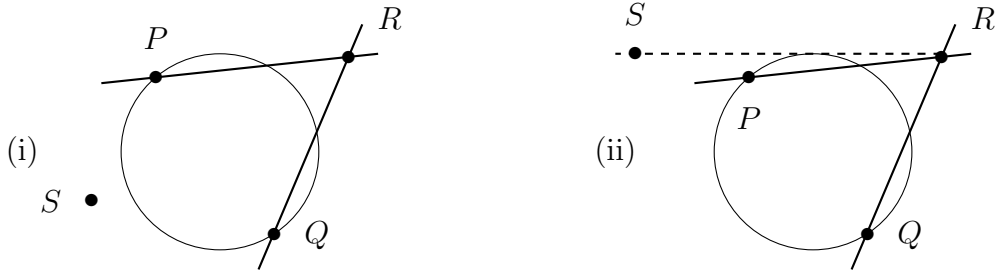

Since $P,R,S$ are non-collinear there is an elementary quadratic transformation $\psi\in\Phi_{C,X}$ with these base points. We let $\phi:=\psi \tau^{-1}\in\Phi_{C,X}$ which is also an elementary quadratic transformation since $\psi$ and $\tau$ share two base points and no three of the base points are collinear. Thus $\phi \tau \psi^{-1} = \id_{\PP^2}\in\Dec(X)$ which is a decomposition into zero elementary quadratic transformations inside $\Dec(X)$.

If $\tau$ is an arbitrary elementary quadratic transformation, then by Proposition~\ref{countablyGeneratedProp} we can write $\tau=\tau_n\cdots\tau_1$ where $\tau_i\in\Dec(C)$ are elementary quadratic transformations which do not contract a tangent line to $C$. We can find $\phi_i,\psi_i\in\Phi_{C,X}$, for $i=1,\ldots,n$, such that $\phi_i \tau_i \psi_i^{-1}\in\Dec(X)$ can be factored into elementary quadratic transformations inside $\Dec(X)$ and by Lemma~\ref{BforCubics} we can factor $\psi_{i+1}\phi_i^{-1}\in\Dec(X)$ into elementary quadratic transformations inside $\Dec(X)$ for $i=1,\ldots,n-1$. Therefore, taking $\phi:=\phi_n$ and $\psi:=\psi_1$, we can factor
\[ \phi\tau\psi^{-1} = (\phi_n\tau_n\psi_n^{-1})(\psi_n\phi_{n-1}^{-1})(\phi_{n-1}\tau_{n-1}\psi_{n-1}^{-1})\cdots(\psi_2\phi_1^{-1})(\phi_1\tau_1\psi_1^{-1}) \]
 into elementary quadratic transformations inside $\Dec(X)$.
\end{proof}

\subsection{An example}

Let $X$ be a nodal (resp.\ cuspidal) cubic, let $\tau\in\Dec(X)$ and suppose that we conjugate $\tau$ to get $\tau'\in\Dec(C)$, for a conic $C$, as in the proof of Proposition~\ref{mainProp}. If $\tau'$ can be decomposed into $n$ elementary quadratic transformations which do not contract any tangent line to $C$ then na\"ively applying the proof of Theorem \ref{cubicTheorem} gives a decomposition of $\tau$ into at most $6(n+1)$ (resp.\ $8(n+1)$) elementary quadratic transformations inside $\Dec(X)$. 

Even in relatively simple cases this gives a very long decomposition which is far from optimal. For example let $X$ be the cuspidal cubic $X = V(x^3 - y^2z)\subset \PP^2$ and consider the de Jonqui\`eres involution $\tau = (xy^2:y^3:2x^3-y^2z)\in\Ine(X)$. This map has one proper base point at the cusp point $P\in X$ and all other base points infinitely near to $P$. If $C$ is the conic $C=V(xz-y^2)$ then $\phi=(x(y+z):x(x+y):z(y+z))\in\Phi_{C,X}$ and conjugating $\tau$ with $\phi$ gives $\tau' = \phi^{-1}\tau\phi\in\Dec(C)$, a map of degree 3 with two proper base points, which decomposes into four elementary quadratic transformations in $\Dec(C)$ not contracting any tangent line to $C$. Therefore we can decompose $\tau$ into at worst 40 elementary quadratic transformations inside $\Dec(X)$, although we expect a minimal decomposition to be much shorter.

\subsection{Generating sets for $\Dec(X)$} \label{generatingCubic}

Let $X$ be the nodal cubic given by the model $X=V(x^3+y^3-xyz)\subset\PP^2$. We see that $\Aut(\PP^2,X)$ is the finite group given by:
\[ \Aut(\PP^2,X) = \left\langle \begin{pmatrix} \omega & 0 & 0 \\ 0 & \omega^2 & 0 \\ 0 & 0 & 1  \end{pmatrix},  \begin{pmatrix} 0 & 1 & 0 \\ 1 & 0 & 0 \\ 0 & 0 & 1  \end{pmatrix} \right\rangle \simeq S_3 \]
where $\omega\in \k$ is a primitive cube root of unity. If $\k$ is an uncountable field then $\Dec(X)$ is an uncountable group and therefore cannot be generated by $\Aut(\PP^2,X)$ and any finite (or countable) collection of elementary quadratic transformations.

Now suppose $X$ is the cuspidal cubic given by the model $X=V(x^3-y^2z)\subset\PP^2$. In this case $\Aut(\PP^2,X)$ is infinite:
\[ \Aut(\PP^2,X) = \left\langle \begin{pmatrix} a & 0 & 0 \\ 0 & 1 & 0 \\ 0 & 0 & a^3  \end{pmatrix} \middle| \, a\in \k^\times \right\rangle \simeq \mathbb{G}_m\,. \]
We do not know whether or not $\Dec(X)$ can be generated by $\Aut(\PP^2,X)$ and any countable collection of elementary quadratic transformations.

\section{Rational curves of higher degree}

We provide a family of plane rational curves $X_d\subset \PP^2$, birationally equivalent to a line and of degree $d\geq4$, to show that we cannot expect Theorems~\ref{lineTheorem},~\ref{conicTheorem}~\&~\ref{cubicTheorem} to be true for curves of higher degree.

Let $X_d$ denote the rational curve given by $X_d=V(x^d-y^{d-1}z)\subset\p^2$ which has a unique singular point $P=(0:0:1)$, a cusp of multiplicity $d-1$, and a unique inflection point $Q=(0:1:0)$. Let $L_Q=(z=0)$ be the tangent line intersecting $X_d$ at $Q$ with multiplicity $d$ and let $L_P=(y=0)$ be the tangent line to the cusp $P$. Any de Jonqui\`eres transformation of degree $d$ with major base point at $P$ and all other base points on $X_d\setminus P$ sends $X_d$ onto a line.

A map in $\Aut(\p^2,X_d)$ has to fix $P$ and $Q$ and preserve $L_P$ and $L_Q$. It is straightforward to check that:
\[ \Aut(\p^2,X_d)=\left\{ (a x : y : a^d z)\,\middle| \, a\in\k^\times \right\} \simeq\mathbb{G}_m\,. \]

\begin{lemma}\label{Lem:Xd}
The standard involution $\sigma = (yz:zx:xy) \in\Bir(\p^2)$ is the only elementary quadratic map that preserves $X_d$, up to composition with an element of $\Aut(\p^2,X_d)$.
\end{lemma}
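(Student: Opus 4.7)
The plan is to determine the base points of any elementary quadratic map $\phi$ preserving $X_d$, and then show that the conic coefficients defining $\phi$ coincide with those of $\sigma$ up to an element of $\Aut(\p^2, X_d) \cong \G_m$.

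First, a multiplicity count: if $\phi$ has base points $b_1, b_2, b_3$ with multiplicities $m_i = \mathrm{mult}_{b_i}(X_d)$, then $\phi(X_d)$ has degree $2d - \sum m_i$, so $\phi(X_d) = X_d$ forces $m_1 + m_2 + m_3 = d$. Since $X_d$ has its unique singularity at the cusp $P$ (multiplicity $d-1$) and is smooth elsewhere, the only partition available for $d \geq 4$ is $(d-1, 1, 0)$. Hence $\phi$ has base points $P$, a smooth point $R \in X_d$, and a point $S \notin X_d$; similarly $\phi^{-1}$ has base points $P$, a smooth point $R' \in X_d$, and a point $S' \notin X_d$.

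Second, a local analysis at the cusp constrains $S$. On the resolution $Y$ of $\phi$, the strict transform $\tilde X_d$ meets the exceptional divisor $E_P$ at a unique point $q$ with multiplicity $d-1$, corresponding to the cusp tangent direction $L_P$. Under the target blow-up $\pi' \colon Y \to \p^2$, the divisor $E_P$ becomes the strict transform of the line $\overline{R'S'}$, so $\pi'(q) \in X_d \cap \overline{R'S'}$ accounts for $d-1$ units of this intersection. At a smooth point of $X_d$ other than $Q$ a line has contact at most $2$ with the curve, while at $Q$ the unique tangent $L_Q$ has contact exactly $d$; so for $d \geq 4$ the only point where a line can have contact $d-1$ with $X_d$ is the cusp $P$ itself. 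Since $P \notin \overline{R'S'}$ by the non-collinearity of the base points of $\phi^{-1}$, the point $\pi'(q)$ cannot lie off the base points of $\pi'$, and must therefore equal $R'$. Hence $q \in E_P \cap E_{R'}$; reading $E_{R'}$ as the strict transform of the line $\overline{PS}$ on the source identifies the tangent direction of $\overline{PS}$ at $P$ with that of $L_P$, whence $S \in L_P$.

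Third, I would pass to coordinates $P = (0:0:1)$, $Q = (0:1:0)$, $S = (1:0:s)$ and parametrise $X_d$ by $u \mapsto (u:1:u^d)$. The conics through $P, R, S$ form a three-dimensional linear system, and each such conic, pulled back to the curve and divided by the common zero at $R$, becomes a polynomial $H(u)$ of degree $d$ depending linearly on three free parameters. The condition $\phi \in \Dec(X_d)$ translates to
\[
(H_1 : H_2 : H_3) \;\propto\; \bigl((Au+B)(Cu+D)^{d-1} : (Cu+D)^d : (Au+B)^d\bigr)
\]
for some M\"obius parameters; in particular $H_2$ and $H_3$ must be pure $d$-th powers of affine linear forms. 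Comparing coefficients in the middle exponent range $2 \leq k \leq d-2$ shows that the ratios $(d-k)/(k+1)$ of successive coefficients in $(Cu+D)^d$ vary with $k$ for $d \geq 4$, whereas the corresponding ratios available in the rigid three-dimensional subspace of $H$'s are constant; this forces the degenerations $C = 0$ or $D = 0$, and likewise $A = 0$ or $B = 0$. A short case analysis on the four resulting combinations then shows that linear independence of $H_1, H_2, H_3$ can only be preserved when $R = Q$ and $s = 0$, that is, $S = (1:0:0)$ --- exactly the base points of $\sigma$.

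With $R = Q$ and $S = (1:0:0)$ the three conics are forced, up to the remaining one-parameter ambiguity, to be scalar multiples of $yz$, $xz$, $xy$; this yields $\phi = \diag(a, 1, a^d) \cdot \sigma$ for some $a \in \k^\times$, with $\diag(a, 1, a^d) \in \Aut(\p^2, X_d)$ as required. The main technical obstacle is in step three, where one must verify carefully that no $(Cu+D)^d$ with both $C, D$ nonzero fits inside the rigid three-dimensional subspace of $H$'s for $d \geq 4$, and that the sub-case analysis on the four degenerations produces no solution outside the configuration $R = Q$, $S = (1:0:0)$.
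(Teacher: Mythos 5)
Your steps 1 and 2 are sound (step 2 does silently use that $E_P$ maps to the strict transform of $\overline{R'S'}$ rather than of $\overline{PR'}$ or $\overline{PS'}$, which needs the quick check $\widetilde{\overline{PR'}}\cdot\tilde X_d=0$ and $\widetilde{\overline{PS'}}\cdot\tilde X_d=1$, neither equal to $d-1$). But the real problem is that you abandon the geometric argument one step too early and replace it with a computation whose key claim is false. In step 3 you assert that comparing middle coefficients forces $C=0$ or $D=0$, i.e.\ that no $(Cu+D)^d$ with $C,D\neq 0$ lies in the three-dimensional space of pullbacks. For $d=4$ this is wrong: the space consists of the polynomials $a(u^{5}-su^{2})+eu^{4}+bu+c$ divisible by $(u-r)$, and taking $r=\tfrac{3}{2}\delta$, $s=5\delta^{3}$ one gets $(u-r)(u+\delta)^{4}=(u^{5}-su^{2})+\tfrac{5}{2}\delta u^{4}-5\delta^{4}u-\tfrac{3}{2}\delta^{5}$, so $(u+\delta)^{4}$ does lie in the space with both $C,D\neq0$. (The range $2\le k\le d-2$ yields two or more independent ratio conditions only when $d\ge5$; for $d=4$ there is a single condition and it has solutions.) One can still kill this configuration, but only by bringing in the condition on $H_1$ as well, so your ``four resulting combinations'' do not exhaust the cases and the subsequent case analysis is not valid as announced. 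A second defect of the computational route is that it is characteristic-sensitive: the binomial-coefficient ratios you compare can vanish or coincide in positive characteristic (e.g.\ when $\car\,\k$ divides $d$ all middle coefficients of $(Cu+D)^d$ vanish), whereas the lemma is stated over an algebraically closed field of arbitrary characteristic.

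The paper avoids all of this by staying geometric, and your own step 2 already contains everything needed. You showed $\pi'(q)=R'$; since $\widetilde{\overline{R'S'}}\cdot\tilde X_d=d-1$ is concentrated at $q$, the line $\overline{R'S'}$ meets $X_d$ at the smooth point $R'$ with multiplicity $(d-1)+1=d$, and the only smooth point admitting a line of contact $d$ is the inflection point, so $R'=Q$ and $\overline{R'S'}=L_Q$. Applying the same reasoning to $\phi^{-1}$ gives $R=Q$ and $S\in L_Q$, which together with your $S\in L_P$ pins down $S=L_P\cap L_Q=(1:0:0)$ with no coordinate computation at all; the final identification with $\sigma$ up to $\Aut(\p^2,X_d)$ then goes through as in your last paragraph. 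This is exactly the paper's argument (it phrases the two steps as ``the contracted line $\tau^{-1}(P)$ has contact $\ge d-1$ with $X_d$, hence equals $L_Q$'' and ``the contracted line $\tau^{-1}(Q)$ is tangent to the cusp, hence equals $L_P$''). I recommend replacing step 3 entirely by this continuation of step 2.
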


\begin{proof} 
It is easy to check that $\sigma\in\Dec(X_d)$. Any other elementary quadratic transformation $\tau\in\Dec(X_d)$ must have one base point at $P\in X_d$, one base point in the smooth locus of $X_d$ and one base point not contained in $X_d$. In particular $\tau^{-1}$ also has a base point at $P$. Since the line $\tau^{-1}(P)$ is tangent to a point of $X_d$ with multiplicity $\geq d-1$, we must have $\tau^{-1}(P) = L_Q$. As the line $L_Q$ is contracted, both $\tau$ and $\tau^{-1}$ must have two base points on $L_Q$, one of which is $L_Q\cap X_d=Q$. Now the line $\tau^{-1}(Q)$ is tangent to the cusp $P$ so we must have $\tau^{-1}(Q) = L_P$, as in Figure~\ref{Fig:Xd}. 

Since the lines $L_P$ and $L_Q$ are contracted, the base points of $\tau$ are $P=(0:0:1)$, $Q=(0:1:0)$ and $L_P\cap L_Q=(1:0:0)$. Hence, up to an element of $\Aut(\p^2,X_d)$, we must have $\tau=\sigma$.
\end{proof}

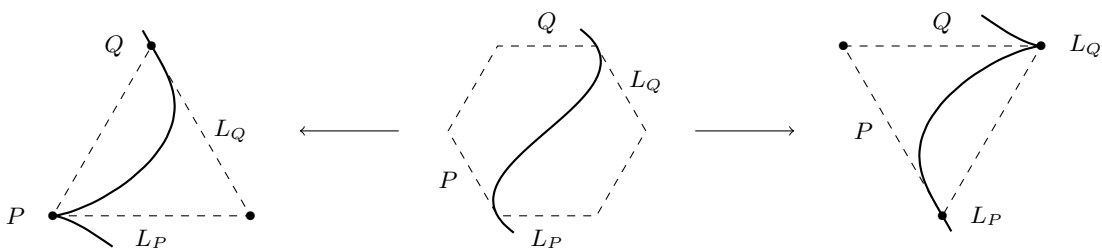
\begin{figure}[h]
\begin{center}
\begin{tikzpicture}[scale=1.3]
	\node at (0,{1-sqrt(3)/2}) [label={left:\scriptsize $P$}] {\tiny $\bullet$};
	\node at (2,{1-sqrt(3)/2}) {\tiny $\bullet$};
	\node at (1,{1+sqrt(3)/2}) [label={left:\scriptsize $Q$}] {\tiny $\bullet$};
	\node at (1.8,1) {\scriptsize $L_Q$};
	\node at (1,-0.1) {\scriptsize $L_P$};
	\draw[dashed] (0,{1-sqrt(3)/2}) -- (2,{1-sqrt(3)/2}) -- (1,{1+sqrt(3)/2}) -- cycle;
	\draw[scale=1,domain=-0.3:1.2,smooth,variable=\x,thick] plot ({8*\x*\x/(3*\x^3+7*\x^2-3*\x+1)},{1 - sqrt(3)/2 + 8*sqrt(3)*\x*\x*\x/(3*\x^3+7*\x^2-3*\x+1)});
	
	\draw[<-] (2.5,1)--(3.5,1);
	
	\node at (5,2.1) {\scriptsize $Q$};
	\node at (6,1.5) {\scriptsize $L_Q$};
	\node at (5,-0.1) {\scriptsize $L_P$};
	\node at (4,0.5) {\scriptsize $P$};
	\draw[dashed] ({5+cos(0)},{1+sin(0)}) -- ({5+cos(60)},{1+sin(60)}) -- ({5+cos(120)},{1+sin(120)}) -- ({5+cos(180)},{1+sin(180)}) -- ({5+cos(240)},{1+sin(240)}) -- ({5+cos(300)},{1+sin(300)}) -- cycle;
	\draw[scale=1,domain=-1.2:1.2,smooth,variable=\x,thick] plot ({5 + \x - \x*\x*\x/2},{1 + sqrt(3)*\x/2});
	
	\draw[->] (6.5,1)--(7.5,1);
	
	\node at (8,{1+sqrt(3)/2}) {\tiny $\bullet$};
	\node at (10,{1+sqrt(3)/2})[label={right:\scriptsize $L_Q$}] {\tiny $\bullet$};
	\node at (9,{1-sqrt(3)/2})[label={right:\scriptsize $L_P$}] {\tiny $\bullet$};
	\node at (8.2,1) {\scriptsize $P$};
	\node at (9,2.1) {\scriptsize $Q$};
	\draw[dashed]  (8,{1+sqrt(3)/2}) -- (10,{1+sqrt(3)/2}) -- (9,{1-sqrt(3)/2}) -- cycle;
	\draw[scale=1,domain=-0.3:1.2,smooth,variable=\x,thick] plot ({10 - 8*\x*\x/(3*\x^3+7*\x^2-3*\x+1)},{1 + sqrt(3)/2 - 8*sqrt(3)*\x*\x*\x/(3*\x^3+7*\x^2-3*\x+1)});
\end{tikzpicture}
\caption{Resolution of the standard involution $\sigma\in\Dec(X_d)$.}
\label{Fig:Xd}
\end{center}
\end{figure}

\begin{proposition}\label{Prop:Xd}
If $d\geq4$, the group $\Dec(X_d)$ cannot be generated by linear maps and elementary quadratic transformations.
\end{proposition}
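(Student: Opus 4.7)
The plan is to use the natural homomorphism $\rho\colon \Dec(X_d) \to \Aut(\tilde{X}_d) = \PGL_2$ induced by the action of $\Dec(X_d)$ on the normalization $\tilde{X}_d \cong \p^1$, and to show that the subgroup generated by linear and elementary quadratic transformations has strictly smaller image under $\rho$ than $\Dec(X_d)$ itself. By Lemma~\ref{Lem:Xd}, every elementary quadratic map in $\Dec(X_d)$ is of the form $\lambda\sigma$ for some $\lambda\in\Aut(\p^2,X_d)$, so it suffices to show that $G:=\langle\,\Aut(\p^2,X_d),\sigma\,\rangle$ is a proper subgroup of $\Dec(X_d)$.

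First I would fix the normalization $\pi\colon\p^1\to X_d$ given by $(s:t)\mapsto(st^{d-1}:t^d:s^d)$, so that $P=\pi(1:0)$ and $Q=\pi(0:1)$. Any $f\in\Dec(X_d)$ restricts to a birational self-map of $X_d$ and lifts uniquely to an automorphism of the smooth projective normalization, giving the homomorphism $\rho$. A direct calculation on the parameterization shows that each element $(ax:y:a^dz)\in\Aut(\p^2,X_d)$ maps under $\rho$ to the diagonal map $(s:t)\mapsto(as:t)$, and that $\sigma$ maps to the swap $(s:t)\mapsto(t:s)$. Consequently $\rho(G)$ is contained in the normalizer $N\subset\PGL_2$ of the diagonal torus $T$ fixing $\{(1:0),(0:1)\}$, namely the one-dimensional subgroup $N=T\rtimes\mathbb{Z}/2$.

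Next I would exhibit an element of $\Dec(X_d)$ whose $\rho$-image lies outside $N$. Let $\phi\in\Bir(\p^2)$ be a de Jonqui\`eres transformation of degree $d$ with major base point at $P$ and all other simple base points on $X_d\setminus\{P\}$, as mentioned in the paragraph preceding Lemma~\ref{Lem:Xd}, so that $\phi(X_d)=L$ is a line. Conjugation by $\phi$ gives an isomorphism $\Dec(L)\xrightarrow{\sim}\Dec(X_d)$, and $\phi|_{X_d}$ induces an isomorphism $\tilde{\phi}\colon\tilde{X}_d\xrightarrow{\sim}L$ of normalizations, under which $\rho$ becomes conjugate to the restriction map $\Dec(L)\to\Aut(L)=\PGL_2$. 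Since every element of $\PGL_2$ extends to a linear automorphism in $\Aut(\p^2,L)$, this restriction map is already surjective; in particular I can pick a linear $\lambda\in\Aut(\p^2,L)$ sending $\tilde{\phi}(1:0)$ to a point outside $\{\tilde{\phi}(1:0),\tilde{\phi}(0:1)\}$. Then $f:=\phi^{-1}\lambda\phi\in\Dec(X_d)$ satisfies $\rho(f)\notin N$, and since $N\subsetneq\PGL_2$, it follows that $f\notin G$.

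The main bookkeeping obstacle is verifying that $\rho$ is really conjugate via $\tilde{\phi}$ to the restriction map $\Dec(L)\to\PGL_2$; this is a direct consequence of functoriality of normalization applied to the birational isomorphism $\phi|_{X_d}\colon X_d\dashrightarrow L$, but it deserves a careful paragraph in the final write-up. The other computational step---establishing that $\rho(\Aut(\p^2,X_d))=T$ and $\rho(\sigma)$ equals the Weyl element---is a routine calculation using the explicit parameterization of $X_d$ and is essentially already carried out in the proof of Lemma~\ref{Lem:Xd}.
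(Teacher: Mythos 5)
Your argument is correct, but it takes a genuinely different route from the paper. The paper also reduces to showing that $G:=\langle\,\Aut(\p^2,X_d),\sigma\,\rangle$ is proper in $\Dec(X_d)$, but then simply observes that the relations $\sigma^2=\id$ and $\sigma\lambda=\lambda^{-1}\sigma$ force every element of $G$ to be of the form $\lambda$ or $\lambda\sigma$, hence of degree $\leq 2$, and exhibits the explicit de Jonqui\`eres maps $\tau_a=(xy^{d-1}:y^d:(1-a)x^d+ay^{d-1}z)$ of degree $d$ in $\Ine(X_d)$ as witnesses. Your proof instead pushes everything through the homomorphism $\rho\colon\Dec(X_d)\to\PGL_2$ given by the action on the normalization: you check that $\rho(G)$ lands in the normalizer $N=T\rtimes\Z/2$ of the torus fixing the preimages of the cusp and the inflection point, and you produce a witness outside $N$ by conjugating a linear automorphism of a line back to $X_d$ through a degree-$d$ de Jonqui\`eres map. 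Both are sound; it is worth noting that they detect \emph{different} elements of $\Dec(X_d)\setminus G$ --- the paper's witnesses $\tau_a$ lie in $\Ine(X_d)$ and are therefore invisible to your $\rho$, while your witness acts nontrivially on $\tilde{X}_d$ but need not have controlled degree. The paper's route is shorter and entirely explicit (one can verify $\tau_a\in\Ine(X_d)$ by direct substitution), whereas yours costs an appeal to the existence of the birational map $(\p^2,X_d)\dashrightarrow(\p^2,L)$ (asserted but not proved in detail in the paper) plus a functoriality check, but in exchange it yields the extra structural information that $\rho(G)\subset N\subsetneq\PGL_2=\rho(\Dec(X_d))$, i.e.\ that $G$ is already proper at the level of the induced action on the normalization.
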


\begin{proof}
By Lemma~\ref{Lem:Xd}, the subgroup of $\Dec(X_d)$ generated by linear maps and elementary quadratic transformations is given by $\langle \, \Aut(\p^2,X_d), \, \sigma \, \rangle$. Since $\sigma^2=\id_{\PP^2}$ and $\sigma\lambda = \lambda^{-1}\sigma$ for any $\lambda\in\Aut(\p^2,X_d)$, all elements of this subgroup are of the form $\lambda$ or $\lambda\sigma$ and are either linear or quadratic. But there are many elements in $\Dec(X_d)$ of degree $>2$; for example the de Jonqui\`eres transformation $\tau_a = (xy^{d-1}:y^d:(1-a)x^d+ay^{d-1}z)$ for $a\in\k^\times$.
\end{proof}

\begin{remark}
The family of maps $\{\tau_a \mid a\in\k^\times\}$, appearing at the end of the proof of Proposition~\ref{Prop:Xd}, form a subgroup of $\Ine(X_d)$ isomorphic to $\G_m$ since $\tau_b \tau_a = \tau_{ab}$ for all $a,b\in\k^\times$.
\end{remark}

\end{document}